\DeclareMathOperator{\lcm}{lcm}
\newtheorem{theorem}{Theorem}[section]
\newtheorem{lemma}[theorem]{Lemma}
\newtheorem{proposition}[theorem]{Proposition}
\newtheorem{corollary}[theorem]{Corollary}
\theoremstyle{definition}
\newtheorem{definition}[theorem]{Definition}
\newtheorem{notation}[theorem]{Notation}
\newtheorem *{Theorem A}{Theorem A}
\newtheorem *{Theorem B}{Theorem B}
\newtheorem *{HS Conjecture}{Herzog-Sch\"{o}nheim Conjecture}
\newcommand\blfootnote[1]{%
	\begingroup
	\renewcommand\thefootnote{}\footnote{#1}%
	\addtocounter{footnote}{-1}%
	\endgroup
}
\begin{document}
\title[(HSC) for small groups and harmonic subgroups]{The Herzog-Sch\"{o}nheim Conjecture for small groups and harmonic subgroups}

\address{Vakgroep Wiskunde, Vrije Universiteit Brussel, Pleinlaan 2, 1050 Brussels, Belgium}
\address{Department of Mathematics, Technion - Israel Institute of Technology, Haifa 32000, Israel}
\author{Leo Margolis}
\email{leo.margolis@vub.ac.be}
\author{Ofir Schnabel}
\email{os2519@yahoo.com}
\keywords{Herzog-Sch\"onheim Conjecture, coset partitions, harmonic subgroups}
\date{\today}

\begin{abstract}
We prove that the Herzog-Sch\"{o}nheim Conjecture holds for any group $G$ of order smaller than $1440$.
In other words we show that in any non-trivial coset partition $\{g_i U_i\}_{i=1}^n $  of $G$ there exist distinct $1 \leq i, j \leq n$ such that $[G:U_i]=[G:U_j]$.

We also study interaction between the indices of subgroups having cosets with pairwise trivial intersection and harmonic integers. We prove that if $U_1$,...,$U_n$ are subgroups of $G$ which have pairwise trivially intersecting cosets and $n \leq 4$ then $[G:U_1]$,...,$[G:U_n]$ are harmonic integers.
\end{abstract}
\maketitle

\blfootnote{\textit{2010 Mathematics Subject Classification} Primary 20D60; Secondary 05E15, 05A17.}
\blfootnote{The first author has been supported by an individual Marie-Curie Individual Fellowship from H2020 EU-project 705112-ZC and the FWO (Research Foundation Flanders). The second author has been supported by the Minerva Stiftung and ISF grant 797/14.}

\section{Introduction}\label{intro}\pagenumbering{arabic} \setcounter{page}{1}
In the 1950's H. Davenport, L. Mirsky, D. Newman and R. Rado proved that if the integers are partitioned  by a finite set of arithmetic progressions, then the largest difference must appear more than once. I.e. if $g_1$,...,$g_n$ and $a_1 \leq a_2 \leq ... \leq a_n$ are integers such that $\{g_i + a_i\mathbb{Z}\}_{i=1}^n$ is a partition of $\mathbb{Z}$ then $a_{n-1} = a_n$. This affirmed a conjecture of P. Erd\H{o}s and opened a broad area of research (see \cite{Porubsky} for a detailed bibliography).
Since an arithmetic progression is a coset of the integers, such partitions can be considered as coset partitions of the group $\mathbb{Z}$.
It was then natural to ask how groups in general can be partitioned by their cosets and if statements similar to the theorem of Davenport, Mirsky, Newman and Rado still hold.

In 1974, M. Herzog and J. Sch\"{o}nheim posed the following.
\begin{HS Conjecture}[HSC] \cite{HerzogSchonheim}
Let $\{g_i U_i\}_{i=1}^n $ be a non-trivial partition of a group $G$ into cosets,
where $U_1, U_2,\ldots ,U_n$ are subgroups of finite index in $G$. Then the
indices $[G:U_1],\ldots, [G:U_n]$ cannot be pairwise distinct.
\end{HS Conjecture}
It is known that in order to prove the Herzog-Sch\"{o}nheim Conjecture it is sufficient to prove it for finite
groups \cite{KorecZnam}. The conjecture holds for finite pyramidal groups (i.e. groups having a Sylow tower)  \cite{BFF}, so in particular for supersolvables groups. Some effort was put into proving the conjecture for groups $G$ whose order satisfies some restriction. E.g. it was proven for a group $G$ in \cite{GinosarSchnabel} if $|G|$ has at most two different prime divisors or exactly three prime divisors and $6$ does not divide $|G|$. It was also proven in \cite{Ginosar} if $|G|$ is smaller than 240.

The main theorem of this note is the following.
\begin{Theorem A}
Any group of order less than $1440$ satisfies the Herzog-Sch\"{o}nheim Conjecture.
\end{Theorem A}

In the proof of Theorem A we will use a group theoretical version of a notion connected to the original problem of Erd\H{o}s.
We say that an $n$-tuple $(a_1$,...,$a_n)$ of positive integers is $\mathbb{Z}$-\textit{harmonic} if there are integers $m_1$,...,$m_n$ such that the arithmetic progressions $m_1 + a_1\mathbb{Z}$,...,$m_n+a_n\mathbb{Z}$ have pairwise trivial intersection. This notion was introduced, simply as harmonic integers, in \cite{HuhnM} and studied e.g. in \cite{Sun92, Chen}. We will use the following generalization of this concept.

\begin{definition} Let $G$ be a group. 
An $n$-tuple of positive integers $(a_1$,...,$a_n)$ is called $G$-\textit{harmonic}
if there exist subgroups $U_1,...,U_n$ of $G$ and $g_1$,...,$g_n\in G$
such that the cosets $g_1U_1,...,g_nU_n$ have pairwise trivial intersection and $[G:U_i]=a_i$  for any $1\leq i\leq n$.
\end{definition}


Harmonic $G$-tuples are connected to coset partitions of groups by the observation that if $\{g_i U_i\}_{i=1}^n $ is a partition of $G$ into cosets then the $n$-tuple of indices $([G:U_1]$,...,$[G:U_n])$ is $G$-harmonic.	
In a similar manner to the correspondence between Erd\H{o}s' original problem and the Herzog-Sch\"{o}nheim Conjecture one can ask for a correspondence between $G$-harmonic and $\mathbb{Z}$-harmonic tuples.\\

\textbf{Question 1:} (Y. Ginosar) Let $G$ be a group and $n$ a positive integer.
Is every $G$-harmonic $n$-tuple also $\mathbb{Z}$-harmonic?\vspace{.2cm}

Notice that if Question 1 admits a positive answer for any group $G$ and any positive integer $n$ then by the theorem of Davenport, Mirsky, Newman and Rado in particular the Herzog-Sch\"{o}nheim Conjecture holds.

Z.-W. Sun asked a related question \cite[Conjecture 1.2]{Sun06}. Namely, for a group $G$, does the existence of a $G$-harmonic $n$-tuple 
$(a_1$,...,$a_n)$ imply that $\gcd(a_i, a_j) \geq n$ for some distinct $1\leq i,j \leq n$? This has been confirmed by W.-J. Zhu for $n \leq 4$ \cite{Zhu} for any group $G$. It has also been proved for $G = \mathbb{Z}$ if $n \leq 20$ \cite{OBryant}.

We obtain the following surprising theorem, which is in fact a generalization of Zhu's result. This result is a key ingredient in the proof of Theorem A.
\begin{Theorem B}
Let $G$ be a group and let $(a_1$,...,$a_n)$ be a $G$-harmonic tuple, where $n \leq 4$. Then $(a_1$,...,$a_n)$ is also $\mathbb{Z}$-harmonic. I.e. for $n \leq 4$, Question 1 admits a positive answer for any group $G$.
\end{Theorem B}

To prove Theorem B we classify all possible $G$-harmonic $n$-tuples for $n\leq 4$. We are not aware of a counterexample to Question 1 for any group $G$.

In Section~\ref{sec_Preliminaries} we recall some preliminaries, explain our strategy and obtain arithmetical restrictions on the indices of subgroups involved in a possible minimal counterexample to the Herzog-Sch\"onheim Conjecture. In Section~\ref{sec_Observs} we obtain general results about coset partitions and $G$-harmonic tuples which are then applied, together with the arithmetical restrictions obtained before, to prove Theorems A and B in Section~\ref{sec_Proofs}. It turns out that the proof of Theorem B boils down to excluding three types of $G$-harmonic
tuples. This is achieved in Propositions~\ref{prop_gcd2}, \ref{prop_4Tuple3Case} and \ref{prop_4Tuple2Case} in which we consider one type of $3$-tuples and two types of $4$-tuples. To prove Theorem A one needs to handle also a $5$-tuple, and this is done in Proposition~\ref{prop_5Tupler11}.

\section{Preliminaries, Egyptian Fractions and Strategy}\label{sec_Preliminaries}
For a group $G$, we will frequently use the notion of harmonic tuple of subgroups hereby explained.
\begin{definition} Let $G$ be a group and $U_1,...,U_n$ subgroups of $G$ of finite index. We call the tuple $(U_1$,...,$U_n)$ \textit{harmonic}, if there exist elements $g_1$,...,$g_n$ in $G$ such that the cosets $g_1U_1,...,g_nU_n$ have pairwise trivial intersection.
\end{definition}
Notice that for a group $G$ and $U_1,...,U_n$ subgroups of $G$ of finite index, $([G:U_1]$,...,$[G:U_n])$ is $G$-harmonic if $(U_1$,...,$U_n)$ is harmonic.
\subsection{Known Results}

Let $G$ be a group and $U_1$,...,$U_n$ subgroups of finite index of $G$. We say that a partition $\{g_i U_i\}_{i=1}^n $ of $G$ admits multiplicity if $[G:U_i] = [G:U_j]$ for some distinct $1 \leq i,j \leq n$.
In this section we recall some conditions concerning coset
partitions without multiplicity obtained in \cite{GinosarSchnabel} and
\cite{Ginosar}. An important one is the so called group theoretical
Chinese reminder theorem.

\begin{lemma}\label{th:CRT1} (see e.g. \cite[Corollary 2.1]{GinosarSchnabel})
Let $U$ and $V$ be two subgroups of a group $G$. If $UV = G$ then $(U,V)$ is not harmonic. In particular if $r$ and $s$ are coprime integers then $(r,s)$ is not $G$-harmonic for any group $G$.
\end{lemma}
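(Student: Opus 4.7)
The plan is to reduce the harmonic condition to a product-of-subgroups statement and then handle the coprime case via a standard index calculation.

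For the first assertion, I would fix arbitrary elements $g_1, g_2 \in G$ and try to show $g_1 U \cap g_2 V \neq \emptyset$. A point in the intersection is an element that can be written as $g_1 u = g_2 v$ with $u \in U$, $v \in V$, i.e.\ I need $g_2^{-1} g_1 \in V U$. First I would observe that $UV = G$ implies $VU = G$: taking inverses, $G = G^{-1} = (UV)^{-1} = V^{-1} U^{-1} = VU$. Hence $g_2^{-1} g_1 \in VU$, so one may write $g_2^{-1} g_1 = v u$ for some $v \in V$, $u \in U$, and then $g_1 u^{-1} = g_2 v$ lies in both $g_1 U$ and $g_2 V$. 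This shows $(U,V)$ is not harmonic.

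For the "in particular" part, I would argue that coprime indices force $UV = G$, at which point the first part finishes the job. Given $[G:U] = r$ and $[G:V] = s$ with $\gcd(r,s)=1$, I would examine $[G : U \cap V]$. Both $r = [G:U]$ and $s = [G:V]$ divide $[G : U \cap V]$, and by the standard orbit-type bound $[U : U \cap V] \leq [G : V] = s$, so $[G : U \cap V] = [G:U]\,[U : U \cap V] \leq rs$. Combined with $\lcm(r,s) = rs$ dividing $[G : U \cap V]$, this pins down $[G : U \cap V] = rs$. Using the standard counting identity $|U|\,|V| = |UV|\,|U \cap V|$ (valid for finite subsets with $U \cap V$ a subgroup) gives $|UV| = |G|$, hence $UV = G$. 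Then the first part applied to this pair produces the required non-trivial intersection for any cosets $g_1 U$ and $g_2 V$.

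There is essentially no obstacle; the only point worth flagging is that $U$ and $V$ need not commute, so one must verify $UV = G \Leftrightarrow VU = G$ before solving the coset-intersection equation, and that the counting formula $|U||V| = |UV||U \cap V|$ is about set sizes and does not require $UV$ itself to be a subgroup. The argument as stated works for arbitrary (not necessarily finite) $G$ once the indices are finite, since everything reduces to the coset spaces $G/U$, $G/V$, and $G/(U \cap V)$.
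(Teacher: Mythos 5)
Your proof is correct and complete. The paper itself does not prove this lemma but cites it from an external source (Corollary 2.1 of Ginosar--Schnabel), so there is nothing internal to compare against; your argument --- observing $UV=G \Leftrightarrow VU=G$ to solve $g_1u=g_2v$, and deriving $[G:U\cap V]=rs$ from the divisibility by $\lcm(r,s)=rs$ together with the bound $[U:U\cap V]\leq [G:V]$ to force $|UV|=|G|$ --- is exactly the standard proof of this group-theoretic Chinese remainder theorem, and you correctly flag the two points (non-commuting subgroups, and the product formula being about sets rather than subgroups) where care is needed.
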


The first two statements of the next lemma are clear, the third follows from Lemma~\ref{th:CRT1} and the last one from \cite[Lemma 2.3]{GinosarSchnabel}.
\begin{lemma}\label{lemma_restrindices}
Let $\{g_i U_i\}_{i=1}^n $ be a coset partition of a group $G$ without multiplicity. Set $[G:U_i] = a_i$ for $1 \leq i \leq n$. Then
\begin{itemize}
\item[a)] $a_i \neq a_j$ for distinct $1 \leq i, j \leq n$.
\item[b)] $\sum_{i=1}^n \frac{1}{a_i} = 1$.
\item[c)] $\gcd(a_i, a_j) >1$ for any $1 \leq i, j \leq n$.
\item[d)] If $G$ is a counterexample to (HSC) of minimal order then $a_i > 2$ for any $1 \leq i \leq n$.
\end{itemize}
\end{lemma}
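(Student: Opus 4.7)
The plan is to dispatch the four assertions in order of increasing substance. Part (a) is immediate — ``no multiplicity'' is exactly the statement that the $a_i$ are pairwise distinct. For part (b) I would count: each coset $g_i U_i$ has cardinality $|G|/a_i$, so the partition identity $|G|=\sum_i|g_iU_i|$, divided by $|G|$, yields $\sum_i 1/a_i = 1$. Reduction to finite $G$ is standard: factoring out the intersection of the cores of the $U_i$ leaves a finite quotient carrying the same partition.

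For part (c) I would first separate the diagonal case $i=j$, which reduces to $a_i>1$ and follows from non-triviality of the partition. For $i\neq j$ the approach is to show that coprime indices force $U_i U_j = G$. Concretely, the standard chain
\[
\lcm(a_i, a_j)\,\bigm|\,[G:U_i\cap U_j]\,\leq\,a_i a_j
\]
forces both the divisibility and the inequality to become equalities when $\gcd(a_i,a_j)=1$, and equality on the right is well known to be equivalent to $U_iU_j=G$. Then Lemma~\ref{th:CRT1} rules out $(U_i,U_j)$ being harmonic — but two cosets inside a partition are trivially disjoint, so $(U_i,U_j)$ \emph{is} harmonic, a contradiction.

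Part (d) is the only step using the minimality hypothesis, and is where I expect the real work. Suppose $a_i=2$; then $U_i$ is normal of index two and $G=g_iU_i\sqcup hU_i$ for some $h\in G$. Each remaining $g_jU_j$ is disjoint from $g_iU_i$ and therefore sits inside $hU_i$, and left-translating by $h^{-1}$ produces a coset partition of $U_i$. I would verify in one line that $U_j\subseteq U_i$, so that this genuinely is a partition \emph{of} $U_i$ by cosets of its subgroups, with indices $[U_i:U_j]=a_j/2$. These are pairwise distinct by (a), hence multiplicity-free; the degenerate cases $n=2$ and ``some $U_j=U_i$'' both force a second index equal to $2=a_i$, again contradicting (a). What remains is a counterexample to (HSC) inside the strictly smaller group $U_i$, contradicting minimality of $G$. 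The main obstacle is purely bookkeeping — confirming $U_j\subseteq U_i$ and that the induced partition is non-trivial and multiplicity-free — but no deep argument is needed beyond the already-proved (a).
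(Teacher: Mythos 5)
Your proposal is correct and matches the paper's (very terse) treatment: (a) and (b) are exactly the "clear" counting facts, and your argument for (c) — coprime indices force $[G:U_i\cap U_j]=a_ia_j$, hence $U_iU_j=G$, contradicting Lemma~\ref{th:CRT1} since two cosets of a partition trivially intersect — is precisely what the paper means by "follows from Lemma~\ref{th:CRT1}". For (d) the paper simply cites \cite[Lemma 2.3]{GinosarSchnabel}; your index-$2$ descent (an index-$2$ subgroup $U_i$ forces $U_j\subseteq U_i$ for all other $j$, translation gives a multiplicity-free partition of the strictly smaller group $U_i$ with indices $a_j/2$, and the degenerate cases clash with (a)) is the standard argument behind that citation and is complete as written.
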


From Lemma~\ref{lemma_restrindices} it follows that a minimal counterexample to (HSC) gives rise to an Egyptian fraction
\begin{align}\label{eq_EgyFr}
1 = \sum_{i=1}^n \frac{1}{a_i},
\end{align}
where $a_1$,...,$a_n$ are pairwise distinct integers bigger than $2$ such that $\gcd(a_i, a_j) > 1$ for any distinct pair $1 \leq i,j \leq n$.

It was shown in \cite{Ginosar} with aid of computers that Lemma~\ref{lemma_restrindices} implies (HSC) for groups of order smaller than $240$. For $|G| = 240$ there exist Egyptian fractions \eqref{eq_EgyFr} satisfying Lemma~\ref{lemma_restrindices}. Using new techniques we improve this bound to 1440 in Theorem A.

\subsection{Arithmetical restrictions for indices}

In this section we show that if $G$ is a group of order less than $1440$ and $\{g_i U_i\}_{i=1}^n$ is a coset partition of $G$ without multiplicity then the greatest common divisor of $([G:U_1],...,[G:U_n])$ is divisible by $2$ or $3$. This will reduce the number of Egyptian fractions \eqref{eq_EgyFr} we will have to consider. Note that $\gcd([G:U_1],...,[G:U_n]) = 1$ is not excluded by Lemma~\ref{th:CRT1}, which makes only a statement about a pair of subgroups.

The following lemmas will help us to reduce the number of
Egyptian fractions~\eqref{eq_EgyFr} we need to consider.

\begin{lemma}\label{lemma_arith3primes}
Let $G$ be a group whose order is divisible by exactly three pairwise different primes.
If there exists a coset partition $\{g_iU_i\}_{i=1}^n$ of $G$ without
multiplicity then there exists some $1 \leq j \leq n$ such that $a_j=[G:U_j]$ is a power of a prime.
\end{lemma}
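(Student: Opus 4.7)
The plan is to argue by contradiction. Write $|G| = p^a q^b r^c$ with $p, q, r$ the three distinct prime divisors of $|G|$, and assume that no $a_i = [G:U_i]$ is a prime power, i.e., each $a_i$ has at least two distinct prime divisors. Since $a_i \mid |G|$, these divisors lie in $\{p,q,r\}$, so the indices $\{1,\dots,n\}$ split into four disjoint classes $I_{qr}, I_{pr}, I_{pq}, I_{pqr}$, where for instance $I_{qr} = \{i : p \nmid a_i,\ q \mid a_i,\ r \mid a_i\}$ and $I_{pqr}$ collects those $i$ for which all three of $p,q,r$ divide $a_i$.

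The next step is to estimate the contribution of each class to $\sum_i 1/a_i$. By Lemma~\ref{lemma_restrindices}(a) the $a_i$ are distinct, and each divides $|G|$, so for example
\[
\sum_{i \in I_{qr}} \frac{1}{a_i} \;\leq\; \sum_{\substack{y, z \geq 1 \\ q^y r^z \mid |G|}} \frac{1}{q^y r^z} \;<\; \sum_{y \geq 1}\frac{1}{q^y} \cdot \sum_{z \geq 1}\frac{1}{r^z} \;=\; \frac{1}{(q-1)(r-1)},
\]
the strict inequality using that $|G|$ is finite and so the infinite product contains terms $1/(q^y r^z)$ with $q^y r^z \nmid |G|$. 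The same argument gives $\sum_{I_{pr}} 1/a_i < 1/((p-1)(r-1))$, $\sum_{I_{pq}} 1/a_i < 1/((p-1)(q-1))$, and $\sum_{I_{pqr}} 1/a_i < 1/((p-1)(q-1)(r-1))$. Summing and invoking Lemma~\ref{lemma_restrindices}(b) then yields
\[
1 \;=\; \sum_{i=1}^n \frac{1}{a_i} \;<\; \frac{1}{(q-1)(r-1)} + \frac{1}{(p-1)(r-1)} + \frac{1}{(p-1)(q-1)} + \frac{1}{(p-1)(q-1)(r-1)} \;=\; \frac{p + q + r - 2}{(p-1)(q-1)(r-1)}.
\]

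To finish, I would verify the elementary inequality $(p-1)(q-1)(r-1) \geq p + q + r - 2$ for distinct primes $p < q < r$. For $p = 2$ it rearranges to $(q-2)(r-2) \geq 3$, with equality exactly when $(q,r) = (3,5)$; for $p \geq 3$ one already has $(p-1)(q-1)(r-1) \geq 2 \cdot 4 \cdot 6 = 48$, which easily beats $p+q+r-2$ in all remaining cases. Thus the right-hand side of the previous display is at most $1$, producing the contradiction $1<1$.

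The one genuine subtlety is that the arithmetic bound is \emph{attained} precisely when $\{p,q,r\}=\{2,3,5\}$, so the strict inequality coming from the finiteness of $|G|$ (rather than merely from distinctness of the $a_i$) is what makes the argument go through in this borderline case.
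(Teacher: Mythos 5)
Your proof is correct. The analytic core --- bounding $\sum_i 1/a_i$ by the sum of geometric series over the divisors of $|G|$ having at least two prime factors, and exploiting strictness because $|G|$ has only finitely many divisors --- is exactly the paper's. The difference is in the setup: the paper first invokes Theorem~C of \cite{GinosarSchnabel} (the conjecture holds when $|G|$ has exactly three prime divisors and $6\nmid |G|$) to reduce to the case $\{p,q,r\}=\{2,3,p\}$ with $p\geq 5$, and then verifies numerically that the three relevant geometric sums add up to at most
$\tfrac14+\tfrac18+\tfrac58=1$; you instead keep $p<q<r$ general, partition the indices into the four classes according to which primes divide $a_i$, and close the argument with the elementary inequality $(p-1)(q-1)(r-1)\geq p+q+r-2$, whose verification you correctly reduce to $(q-2)(r-2)\geq 3$ when $p=2$. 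Your version is self-contained (it needs only Lemma~\ref{lemma_restrindices}, not the external Theorem~C) at the cost of one extra, easy inequality; the paper's version outsources the non-$\{2,3,p\}$ cases to a cited result and keeps the arithmetic to a single parameter. In both arguments the borderline case is $\{2,3,5\}$, where the geometric bound equals $1$ exactly and only the strictness of the finite sum yields the contradiction --- you identify this subtlety explicitly, which the paper passes over with the phrase ``is smaller than the sum of all the above.''
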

\begin{proof}
Arguing by contradiction assume that $a_j$ is not a prime power for any $1 \leq j \leq n$. By \cite[Theorem C]{GinosarSchnabel} we may assume that the prime divisors of $|G|$ are $2$, $3$ and $p$ for some prime $p \geq 5$.
Since $a_j$ is not a power of a prime for any $1 \leq j \leq n$ it is divisible by either $6$, $2p$ or $3p$.
Now,
\begin{align*}
\sum_{x=1,z=1}^{\infty}\frac{1}{2^x p^y} &= \left(\sum_{x=1}^\infty \frac{1}{2^x} \right) \left(\sum_{z=1}^\infty \frac{1}{p^z}\right) =  \frac{1}{p-1}\leq \frac{1}{4}, \\
\sum_{y=1, z=1}^{\infty}\frac{1}{3^y p^z} &=  \left(\sum_{y=1}^\infty \frac{1}{3^y} \right) \left(\sum_{z=1}^\infty \frac{1}{p^z}\right) = \frac{1}{2(p-1)}\leq \frac{1}{8},\\
\sum_{x=1, y=1, z=0}^{\infty}\frac{1}{2^x 3^y p^z} &=  \left(\sum_{x=1}^\infty \frac{1}{2^x} \right) \left(\sum_{y=1}^\infty \frac{1}{3^y} \right) \left(\sum_{z=1}^\infty \frac{1}{p^z}\right) =  \frac{1}{2}\left(1+\frac{1}{p-1}\right)\leq \frac{5}{8}.
\end{align*}
Clearly, $\sum_{i=1}^n\frac{1}{a_i}$ is smaller then the sum of all the above which is at most $1$. This contradicts Lemma~\ref{lemma_restrindices}.
\end{proof}


In general arithmetical arguments are not enough any more to show a statement similar to Lemma~\ref{lemma_arith3primes}. But for groups of small order this can still be done explicitly.

\begin{lemma}\label{lemma_CommonPrimeDiv}
Let $G$ be a group of order smaller than $1440$ such that
there exists a coset partition $\{g_iU_i\}_{i=1}^n$ of $G$ without
multiplicity. Then there is $1 \leq j \leq n$ such that $a_j = [G:U_j]$ is a power of a
prime.
\end{lemma}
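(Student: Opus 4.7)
The plan is to reduce to the case where $|G|$ has exactly four distinct prime divisors and then exclude that case by an explicit finite search. By the Ginosar-Schnabel result cited in the introduction, (HSC) holds whenever $|G|$ has at most two distinct prime divisors, so the conclusion is vacuous in that range. Since $|G| < 1440 < 2 \cdot 3 \cdot 5 \cdot 7 \cdot 11$, $|G|$ has at most four distinct prime divisors, and the case of exactly three is covered by Lemma~\ref{lemma_arith3primes}.

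It remains to handle $|G|$ divisible by exactly four distinct primes $p_1 < p_2 < p_3 < p_4$. The inequality $p_1 p_2 p_3 p_4 \leq |G| < 1440$ forces $p_1 = 2$ and $p_2 \in \{3,5\}$, and yields a short explicit list of admissible quadruples $(p_1,p_2,p_3,p_4)$. I would then suppose for contradiction that no $a_j = [G:U_j]$ is a prime power, so that each $a_j$ has at least two distinct prime divisors, all drawn from $\{p_1,p_2,p_3,p_4\}$ (since $a_j \mid |G|$).

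By Lemma~\ref{lemma_restrindices}, the tuple $(a_1,\ldots,a_n)$ consists of pairwise distinct integers greater than~$2$, is pairwise non-coprime, and satisfies $\sum_{j=1}^{n} 1/a_j = 1$. Moreover $\lcm(a_1,\ldots,a_n) \mid |G|$, hence $\lcm(a_1,\ldots,a_n) < 1440$. For each admissible prime quadruple this becomes a finite combinatorial problem: enumerate all tuples of pairwise distinct, pairwise non-coprime integers built from $\{p_1,p_2,p_3,p_4\}$, each having at least two distinct prime divisors, with reciprocal sum $1$ and lcm below $1440$. I would carry out this enumeration (computer-aided) and verify that no such tuple exists, producing the contradiction.

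The main obstacle is keeping the enumeration manageable. I would prune aggressively using (i) the partial-sum bound, namely that any prefix of $\sum 1/a_j$ must remain below~$1$; (ii) the lcm bound, which rules out candidates whose joint support is too large; and (iii) the intersecting-family structure forced on the prime supports by the pairwise gcd condition, e.g.\ if some $a_i$ uses only the primes $\{p_3,p_4\}$ then no $a_j$ may use only the primes $\{p_1,p_2\}$. With these restrictions in place, the search collapses quickly in each admissible quadruple, but doing the reduction by hand for every case is the genuinely tedious step.
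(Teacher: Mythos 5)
Your reduction to the four-prime case (two primes via the cited Ginosar--Schnabel result, three via Lemma~\ref{lemma_arith3primes}) is exactly the paper's first step, and the finite verification you propose would indeed come up empty, so the strategy is sound. There is, however, one concrete slip: the claim that $p_1p_2p_3p_4 \leq |G| < 1440$ forces $p_1 = 2$ is false, since $3 \cdot 5 \cdot 7 \cdot 11 = 1155 < 1440$. If you enumerate only quadruples containing $2$, you omit the order $|G| = 1155$. The omission is harmless --- the divisors of $1155$ with at least two prime factors have reciprocal sum $\frac{263}{1155} < 1$, and in any case a group of squarefree order is pyramidal and satisfies (HSC) --- but as written your case list is incomplete and must be patched.

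The paper also organizes the terminal check more economically than a tuple search. It introduces $d(z)$, the sum of the reciprocals of all divisors of $z$ divisible by at least two primes. Since by Lemma~\ref{lemma_restrindices} the $a_j$ are pairwise distinct divisors of $|G|$, each with at least two prime factors under the contradiction hypothesis, one gets $\sum_j \frac{1}{a_j} \leq d(|G|)$, so it suffices to show $d(|G|) < 1$ --- no enumeration of tuples, and no use of the pairwise-gcd or lcm conditions at all. Squarefree orders are disposed of up front via Burnside's normal $p$-complement theorem (such groups are pyramidal), and monotonicity properties of $d$ reduce the remaining non-squarefree orders to the two computations $d(840) = \frac{737}{840}$ and $d(1260) = \frac{1171}{1260}$. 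Your pruning condition (i), the partial-sum bound, is in effect a weaker form of this observation; replacing the tuple enumeration by the bound $\sum_j \frac{1}{a_j} \leq d(|G|) < 1$ would collapse the step you describe as genuinely tedious into a couple of one-line checks.
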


\begin{proof}
By Lemma~\ref{lemma_arith3primes} we can assume that the order of $G$ is divisible by at least four primes. Furthermore, if the order of $G$ is squarefree then $G$ is pyramidal, by a famous corollary from Burnside's $p$-complement theorem \cite[IV, Satz 2.7]{Huppert}.

For an integer $z$ denote by $d(z)$ the sum of the inverses of all divisors of $z$ which are divisible by at least two primes. So e.g. $d(24) = \frac{1}{6} + \frac{1}{12} + \frac{1}{24}$. Then $d(z) < d(p \cdot z)$ for any prime $p$. Assume $z = p^e \cdot q^f \cdot m$ such that $p$ and $q$ are primes, $m$ is comprime with $pq$ and $e,f \geq 0$. If $p < q$ and $f \geq e$ then $d(z) < p^f \cdot q^e \cdot m$. Considering the prime decompositions of all numbers smaller than $1440$ which have more than three prime divisors and are not squarefree there are two critical cases remaining, namely $840 = 2^3 \cdot 3 \cdot 5  \cdot 7$ and $1260 = 2^2 \cdot 3^2 \cdot 5  \cdot 7$. So
$$d(840) = \frac{737}{840}, \ \ d(1260) = \frac{1171}{1260} $$
implies the lemma.
\end{proof}


We proceed by showing that the prime involved in the previous lemma is in fact at most $3$.
\begin{lemma}\label{lemma_SmallGroupsInds}
Let $G$ be a group such that there exists a coset partition
$\{g_iU_i\}_{i=1}^n$ of $G$ without multiplicity. If $|G|<1440$
then $\gcd([G:U_1],...,[G:U_n])$ is divisible by $2$ or $3$.
\end{lemma}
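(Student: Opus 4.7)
The plan is to bootstrap from Lemma~\ref{lemma_CommonPrimeDiv}. First I would use it to produce an index $j$ with $a_j := [G:U_j] = p^k$ a prime power, for some prime $p$. Lemma~\ref{lemma_restrindices}(c) then forces $p \mid a_i$ for every $i$, so $p$ divides $d := \gcd(a_1,\dots,a_n)$, and the lemma reduces to ruling out $p \geq 5$. I would therefore argue by contradiction, assuming every $a_i$ is a multiple of some fixed prime $p \geq 5$.

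The key tool is the Egyptian fraction identity of Lemma~\ref{lemma_restrindices}(b), combined with the observation that, by Lemma~\ref{lemma_restrindices}(a), the $a_i$ are \emph{distinct} divisors of $N := |G| < 1440$, each divisible by $p$. Summing over all such divisors gives
\[
1 \;=\; \sum_{i=1}^n \frac{1}{a_i} \;\le\; \sum_{\substack{d\mid N \\ p\mid d}} \frac{1}{d} \;=\; \frac{1}{p}\sum_{e \mid N/p} \frac{1}{e} \;=\; \frac{\sigma(N/p)}{N},
\]
so it suffices to prove the abundancy inequality $\sigma(M)/M < p$ for $M = N/p \leq \lfloor 1439/p\rfloor$.

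The main (and essentially only) obstacle is verifying this abundancy bound, which I would dispatch by a short case split over the prime $p \geq 5$. For $p = 5$ the range is $M \leq 287$, where the maximum of $\sigma(M)/M$ is attained at the superabundant number $M = 240$ with value $744/240 = 3.1 < 5$. For $p = 7$ the range is $M \leq 205$, maximized at $M = 180$ with $\sigma(M)/M = 546/180 < 7$. For $p \geq 11$ the range is $M \leq 130$, where a direct inspection of the superabundant numbers shows $\sigma(M)/M \leq \sigma(120)/120 = 3 < p$. In each case $\sigma(M)/M < p$, contradicting the displayed inequality, and hence $p \in \{2, 3\}$ as required.
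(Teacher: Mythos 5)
Your proposal is correct and follows essentially the same route as the paper: both invoke Lemma~\ref{lemma_CommonPrimeDiv} together with Lemma~\ref{lemma_restrindices}(c) to extract a common prime divisor $p$ of all the indices, and then rule out $p\geq 5$ by bounding the Egyptian fraction $\sum_i 1/a_i$ via the sum of reciprocals of distinct divisors of $|G|$. The only real difference is in the final numerical estimate --- the paper bounds $\sum_i 1/(a_i/p)$ by the Euler product $\prod_{k=1}^{4}\left(1+\frac{1}{p_k-1}\right)\leq \frac{105}{24}<5$ over the at most four prime divisors of $|G|$, while you verify the equivalent abundancy bound $\sigma(|G|/p)\cdot(|G|/p)^{-1}<p$ by inspecting superabundant numbers --- and both computations check out.
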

\begin{proof}
Set $a_i=[G:U_i]$ for $1 \leq i \leq n$. Then by Lemma~\ref{lemma_restrindices} and Lemma~\ref{lemma_CommonPrimeDiv} there exist certain integers $b_1$,...,$b_n$ and a prime $p$ such that $a_i=pb_i$ for any $1\leq i \leq n$. By Lemma~\ref{lemma_restrindices}
$$ 1 = \sum _{i=1}^n \frac{1}{a_i} = \sum _{i=1}^n \frac{1}{pb_i}=\frac{1}{p}\sum _{i=1}^n \frac{1}{b_i}.$$
So $\sum _{i=1}^n \frac{1}{b_i}=p$.
 Now, since $|G|<1440$, the order of $G$ admits
at most $4$ prime factors $p_1$, $p_2$, $p_3$, $p_4$ and therefore
$$\sum _{i=1}^n \frac{1}{b_i}<\prod_{k=1}^4\left(\sum _{j=0}^{\infty}\frac{1}{p_k^j}\right) \leq \prod_{k=1}^4 \left(1 + \frac{1}{p_k-1}\right) \leq   \frac{105}{24}<5.$$
Here in the last step we put $\{p_1,p_2,p_3,p_4\}=\{2,3,5,7\}.$
\end{proof}

\subsection{Strategy} In view of Lemma~\ref{lemma_restrindices} and Lemma~\ref{lemma_SmallGroupsInds} using elementary programming one can easily obtain all possible Egyptian fractions \eqref{eq_EgyFr} which could correspond to a counterexample to the (HSC) of order smaller than 1440. Certain specific subsets of indices appear in all of these Egyptian fractions. The proof of Theorem B consists in showing that a specific form of $3$-tuples (Proposition~\ref{prop_gcd2}) and two specific forms of $4$-tuples (Propositions~\ref{prop_4Tuple3Case}, \ref{prop_4Tuple2Case}) do not appear as indices in a counterexample. This will suffice to prove that there is no counterexample of order smaller than 1440, except possibly of order 1080. Finally in order to exclude counterexamples of order 1080 we have to show that also a certain $5$-tuple does not appear as a subset of indices in a counterexample (Proposition~\ref{prop_5Tupler11}), i.e. that such tuples are not $G$-harmonic for any group $G$.

We start in the next section by collecting results which do not depend on the specific forms of subsets of indices we have to consider to obtain our main results, but which are very useful to prove them.

\section{Observations on indices of harmonic tuples of subgroups}\label{sec_Observs}
We will first collect some results not involving harmonic tuples of subgroups.
$G$ will always denote a finite group. Whenever we will speak of a group we will mean a finite group.

We start with a purely set theoretical observation.

\begin{lemma}\label{lemma_Subsets}
Let $R, S$ and $T$ be proper subsets of a finite set $M$ such that \newline $M = R \cup S = R \cup T = S \cup T$. Then
$$|R \cap S \cap T| = |R| - |M \setminus S| - |M \setminus T|.$$
\end{lemma}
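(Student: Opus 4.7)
My plan is to translate the hypothesis into a statement about complements, where it becomes much cleaner. Set
\[
A = M \setminus R, \quad B = M \setminus S, \quad C = M \setminus T.
\]
The hypothesis $M = R \cup S$ is equivalent to saying no element of $M$ lies outside both $R$ and $S$, i.e. $A \cap B = \emptyset$. Applying the same reasoning to the other two unions, I conclude that $A$, $B$ and $C$ are pairwise disjoint subsets of $M$.

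Next I would express $R \cap S \cap T$ as a complement:
\[
R \cap S \cap T = M \setminus (A \cup B \cup C).
\]
Since $A$, $B$, $C$ are pairwise disjoint, inclusion–exclusion collapses and gives
\[
|R \cap S \cap T| = |M| - |A| - |B| - |C|.
\]
Finally, I would rewrite $|M| - |A| = |R|$, leaving
\[
|R \cap S \cap T| = |R| - |B| - |C| = |R| - |M \setminus S| - |M \setminus T|,
\]
which is the claimed identity.

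There is no real obstacle here; the only conceptual step is recognizing that the three covering hypotheses are exactly the assertion that the three complements are pairwise disjoint, after which the formula is a one-line counting argument. The asymmetric appearance of the formula (singling out $R$) is an artifact of the rewriting $|M| - |A| = |R|$; by symmetry the same quantity equals $|S| - |M \setminus R| - |M \setminus T|$ and $|T| - |M \setminus R| - |M \setminus S|$ as well.
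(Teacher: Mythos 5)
Your proof is correct and is essentially the paper's argument in different clothing: the paper observes directly that $M\setminus S$ and $M\setminus T$ are disjoint subsets of $R$, which is exactly your statement that the three complements are pairwise disjoint, restricted to the two that matter. Nothing to add.
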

\begin{proof}
This follows immediately from $M \setminus S \subseteq R$ and $M \setminus T \subseteq R$ together with $M \setminus (S \cup T) = \emptyset$.
\end{proof}

We next collect some basic elementary facts which we will use without further mention.

\begin{lemma} Let $U$ and $V$ be subgroups of $G$.
\begin{enumerate}
\item The $\lcm([G:U],[G:V])$ is a divisor of $[G:U \cap V]$.
\item We have an equality $$|UV| = \frac{|U||V|}{|U \cap V|} = \frac{|G|[G:U\cap V]}{[G:U][G:V]}.$$
\item Let $g_u, g_v \in G$. If $g_u U \cap g_v V = \emptyset$ then $g_u^{-1}g_v \notin UV$.
\end{enumerate}
\end{lemma}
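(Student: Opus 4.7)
The lemma collects three elementary facts, so the plan is largely bookkeeping rather than ingenuity; none of the three parts should present any real obstacle, and the main point is just to have these formulas recorded for later use.

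For part (1), the plan is to apply the tower law for subgroup indices. Since $U\cap V$ is a subgroup of $U$, the multiplicativity of the index yields $[G:U\cap V] = [G:U]\,[U:U\cap V]$, so $[G:U]$ divides $[G:U\cap V]$; by symmetry, $[G:V]$ divides $[G:U\cap V]$ as well. Since any common multiple of $[G:U]$ and $[G:V]$ is divisible by their least common multiple, $\lcm([G:U],[G:V])$ divides $[G:U\cap V]$, as claimed.

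For part (2), the first equality $|UV| = |U||V|/|U\cap V|$ is the classical product formula. I would justify it via the surjection $U\times V \to UV$, $(u,v)\mapsto uv$, and the observation that the fibre over a given product $uv$ is exactly $\{(ut,t^{-1}v) : t\in U\cap V\}$, hence of size $|U\cap V|$. The second equality is then a purely formal substitution: replace $|U|$ by $|G|/[G:U]$, $|V|$ by $|G|/[G:V]$, and $|U\cap V|$ by $|G|/[G:U\cap V]$, and simplify. No further effort is needed.

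For part (3), the plan is to argue by contrapositive. Suppose $g_u^{-1}g_v \in UV$, so that $g_u^{-1}g_v = uv$ for some $u\in U$ and $v\in V$. Rearranging gives $g_u u = g_v v^{-1}$, an element that lies simultaneously in $g_u U$ (since $u\in U$) and in $g_v V$ (since $v^{-1}\in V$). This contradicts the assumption $g_u U \cap g_v V = \emptyset$. The only delicate point here is simply to keep track of the sides of the cosets, but since $u$ and $v^{-1}$ both come from their respective subgroups the verification is immediate.
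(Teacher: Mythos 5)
Your proof is correct: all three parts are verified by the standard arguments (tower law for indices, the product formula via fibres of $(u,v)\mapsto uv$, and the contrapositive rearrangement $g_u u = g_v v^{-1}$). The paper itself offers no proof — it records these as ``basic elementary facts which we will use without further mention'' — so your write-up simply supplies the routine details the authors chose to omit, and it does so accurately.
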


This lemma gives rise to the following useful notion.

\begin{notation}
Let $U$ and $V$ be subgroups of a group $G$ of finite index. We will denote by $\alpha(U,V)$ the integer satisfying
$$[G:U \cap V] = \lcm([G:U],[G:V])\alpha(U,V).$$
\end{notation}

We will need the following observations about double cosets.

\begin{lemma}\label{lemma_DoubleCosets}
Let $U, V, W$ be subgroups of $G$ and $x \in G$.
\begin{enumerate}
\item The double coset $UxV$ has cardinality divisible by $\frac{|G|\lcm([G:U],[G:V])}{[G:U][G:V]}$.
\item $VU \cap WU$ consists of double cosets of $(V \cap W)U$.
\end{enumerate}
\end{lemma}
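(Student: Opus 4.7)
The plan is to prove the two parts separately, each by a direct computation using standard facts about cardinalities of products and intersections of subgroups.

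For part (1), I would start from the well-known formula $|UxV| = \frac{|U||V|}{|U \cap xVx^{-1}|}$, which comes from counting the fibres of the map $U \times V \to UxV$, $(u,v) \mapsto uxv$. The key observation is that $xVx^{-1}$ has the same index in $G$ as $V$, so the index of the intersection $U \cap xVx^{-1}$ is divisible by $\lcm([G:U], [G:xVx^{-1}]) = \lcm([G:U], [G:V])$ by the first elementary fact listed in the preceding lemma. Consequently $|U \cap xVx^{-1}|$ is a divisor of $\frac{|G|}{\lcm([G:U],[G:V])}$, and substituting back yields that $|UxV|$ is a multiple of
$$\frac{|U||V|\,\lcm([G:U],[G:V])}{|G|} = \frac{|G|\,\lcm([G:U],[G:V])}{[G:U][G:V]},$$
since $|U||V| = |G|^{2}/([G:U][G:V])$. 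This gives part (1).

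For part (2), the statement to prove is that $VU \cap WU$ is a union of $(V\cap W)$--$U$ double cosets, i.e.\ that $y \in VU \cap WU$ implies $(V \cap W)\,y\,U \subseteq VU \cap WU$. This is a direct verification: writing $y = v_1 u_1 = w_1 u_2$ for suitable $v_1 \in V$, $w_1 \in W$, $u_1, u_2 \in U$, any element of $(V \cap W)\,y\,U$ has the form $t y u$ with $t \in V \cap W$ and $u \in U$. Then $tyu = (tv_1)(u_1 u) \in VU$ since $tv_1 \in V$ and $u_1 u \in U$, and similarly $tyu = (tw_1)(u_2 u) \in WU$ since $tw_1 \in W$. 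Hence $tyu \in VU \cap WU$, which proves the claim.

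Neither part looks genuinely difficult; both are bookkeeping exercises. If anything, the only friction is the arithmetic identity in part (1) that rewrites the bound in the stated form, which is just the relation $\lcm(a,b)\gcd(a,b)=ab$ applied to the two indices. I would present part (1) first since it is the computational one, then dispatch part (2) as a two-line verification.
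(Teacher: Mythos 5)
Your proof is correct and follows essentially the same route as the paper: part (1) reduces to $|UxV| = |U(xVx^{-1})|$ together with the divisibility of $[G:U\cap xVx^{-1}]$ by $\lcm([G:U],[G:V])$, and part (2) is the same direct verification that $t y u$ lies in both $VU$ and $WU$ after writing $y$ in the two factorizations. No gaps.
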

\begin{proof}
The first statement follows from $|UxV| = |U(xVx^{-1})|$.

For the second statement observe that $(V \cap W)U \subseteq VU \cap WU$ and $VU \cap WU$ consists of left cosets of $U$. Let  $vU = wU \subseteq VU \cap WU$ with $v \in V$, $w \in W$. Then $(V \cap W)vU \subseteq VU$. Since $v = wu$ for a certain $u \in U$ we have $(V \cap W)v \subseteq WU$, so $(V \cap W)vU \subseteq VU \cap WU$.
\end{proof}

The following lemma describes a situation we frequently encounter.

\begin{lemma}\label{lemma_main}
Let $U_1,...,U_n,V$ be subgroups of $G$ such that
$$ |U_1V|+...+|U_nV| \geq |G|. $$
If $VU_i \subseteq U_jU_i$ for any $j > i$, then $(U_1,...,U_n,V)$ is not harmonic.
\end{lemma}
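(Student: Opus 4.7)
The plan is to argue by contradiction in the $G$-set $G/V$ of left cosets of $V$. Assume the tuple is harmonic, so that there exist $g_1,\ldots,g_n,h\in G$ making $g_1U_1,\ldots,g_nU_n,hV$ pairwise disjoint. For each $1\le i\le n$ I would introduce the subset
$$C_i=\{xV\in G/V:xV\cap g_iU_i\ne\emptyset\}\subseteq G/V.$$
A direct computation (parameterising by $u\in U_i$ and using $g_iu_1V=g_iu_2V\iff u_1^{-1}u_2\in U_i\cap V$) gives $|C_i|=[U_i:U_i\cap V]=|U_iV|/|V|$, so the hypothesis $\sum_i|U_iV|\ge|G|$ translates into $\sum_i|C_i|\ge[G:V]=|G/V|$. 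Moreover, $g_iU_i\cap hV=\emptyset$ says precisely that $hV\notin C_i$ for every $i$.

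The heart of the argument is to show that the $C_i$ are pairwise disjoint; once this is established, combining it with the inequality above forces the $C_i$ to cover all of $G/V$, contradicting $hV\notin\bigcup_i C_i$. To prove disjointness, observe that $xV\in C_i\cap C_j$ is equivalent to $x\in g_iU_iV\cap g_jU_jV$, which in turn is equivalent to $g_j^{-1}g_i\in U_jVU_i$. Assuming $i<j$, the hypothesis $VU_i\subseteq U_jU_i$ yields $U_jVU_i\subseteq U_jU_jU_i=U_jU_i$. But $g_iU_i\cap g_jU_j=\emptyset$ forces $g_j^{-1}g_i\notin U_jU_i$, so $C_i\cap C_j=\emptyset$; by symmetry in $i$ and $j$, this holds for all $i\ne j$.

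The conceptual step that makes the proof work is passing to $G/V$: there the hypothesis $\sum|U_iV|\ge|G|$ becomes a pigeonhole statement on the $[G:V]$ cosets, while the asymmetric inclusion $VU_i\subseteq U_jU_i$ is exactly what prevents the sets $C_i$ from overlapping. I would expect the main obstacle to be spotting this choice of auxiliary $G$-set; once it is made, the remaining verifications are routine manipulations with double cosets, in the same spirit as Lemma~\ref{lemma_DoubleCosets}.
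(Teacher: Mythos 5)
Your proposal is correct and is essentially the paper's own argument transported to the quotient set $G/V$: your sets $C_i$ are exactly the images of the sets $g_iU_iV$ used in the paper, your disjointness claim (via $g_j^{-1}g_i\in U_jVU_i\subseteq U_jU_i$) is the paper's Claim, and the final pigeonhole contradiction with $hV\notin\bigcup_i C_i$ matches the paper's observation that $g_v\in g_iU_iV$ would force $g_i^{-1}g_v\in U_iV$.
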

\begin{proof}
Assume to the contrary that $g_1U_1,...,g_nU_n, g_vV$ have pairwise trivial intersection.

\textbf{Claim:} $g_iU_iV \cap g_jU_jV = \emptyset$ for distinct $1 \leq i,j \leq n$.\\
Otherwise, assuming $j > i$ we have
\begin{align}\label{eq_inmain}
g_i u_iv = g_ju_jv' \Leftrightarrow g_j^{-1}g_i = u_jv'v^{-1}u_i^{-1}
\end{align}
for certain $u_i \in U_i$, $u_j \in U_j$ and $v,v' \in V$. Now $v'v^{-1}u_i^{-1} \in VU_i$, so $v'v^{-1}u_i^{-1} \in U_jU_i$, by assumption. Hence \eqref{eq_inmain} implies $g_j^{-1}g_i \in U_jU_i$, a contradiction. So the claim follows.

Since
$$|U_1V|+...+|U_nV| = |g_1U_1V|+...+|g_nU_nV| \geq |G|$$
we obtain $|U_1V|+...+|U_nV| = |G|$ and $G = \sqcup_{z=1}^n g_zU_zV$. Therefore $g_v \in g_iU_iV$ for some $1 \leq i \leq n$, but then $g_i^{-1}g_v \in U_iV$, a contradiction.
\end{proof}

In most situations the following special form of the lemma will suffice.

\begin{corollary}\label{corollary_main}
Let $U, V$ and $W$ be subgroups of $G$ such that $|VU| + |WU| \geq |G|$ and $UV \subseteq WV$. Then $(U,V,W)$ is not harmonic.
\end{corollary}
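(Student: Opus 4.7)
The plan is to observe that the corollary is precisely the $n=2$ case of Lemma \ref{lemma_main}, after a careful relabeling of the subgroups. The main (essentially only) task is to verify that the hypotheses line up correctly, since the role played by ``$V$'' in the lemma is not the same as the role played by ``$V$'' in the corollary.

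Concretely, I would apply Lemma \ref{lemma_main} with $n=2$, setting $U_1 := V$, $U_2 := W$ (the $V$ and $W$ of the corollary), and taking the subgroup called $V$ in the lemma to be the subgroup $U$ of the corollary. With this identification, the size hypothesis $|U_1 V_{\text{lem}}| + |U_2 V_{\text{lem}}| \geq |G|$ translates directly into $|VU| + |WU| \geq |G|$, which is exactly the first assumption of the corollary. Since $n=2$, the only constraint coming from ``$V_{\text{lem}} U_i \subseteq U_j U_i$ for $j > i$'' is the single pair $(i,j)=(1,2)$, which under the relabeling becomes $UV \subseteq WV$, exactly the second assumption of the corollary.

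Lemma \ref{lemma_main} then concludes that $(U_1, U_2, V_{\text{lem}}) = (V, W, U)$ is not harmonic. To finish, I would note that the property of being harmonic is manifestly invariant under permutations of the tuple (the definition only asks for pairwise trivial intersection of some cosets of the listed subgroups, which is a symmetric condition). Hence $(U, V, W)$ is not harmonic either, as desired.

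There is no real obstacle here; the entire content is that the reader correctly matches up the symbols. The only point to flag carefully in writing would be the clash of notation between the $V$ of Lemma~\ref{lemma_main} and the $V$ of Corollary~\ref{corollary_main}, which is why I would spell out the substitution explicitly rather than just say ``this is a special case.''
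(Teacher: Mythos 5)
Your proof is correct and is exactly the intended argument: the paper gives no separate proof, presenting the corollary as the $n=2$ specialization of Lemma~\ref{lemma_main}, and your substitution $U_1 = V$, $U_2 = W$, with the lemma's $V$ taken to be $U$, matches both hypotheses and (after the harmless permutation of the tuple) the conclusion. Nothing to add.
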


We next collect some results which follow from different assumptions on the indices of intersections of subgroups.

\begin{lemma}\label{lemma_ThreeAlphas1}
Let $U$, $V$ and $W$ be subgroups of $G$ such that
$$[G:U] = ar_u, \ \ [G:V] = ar_v, \ \ [G:W] = ar_w,$$
where $r_u$, $r_v$, $r_w$ are pairwise coprime. If
$$\alpha(U, V) = \alpha(U,W) = \alpha(V,W) = 1$$
then
$$VU = UV = UW = WU = VW = WV.$$
So $UV$ is a subgroup of $G$ and moreover $[G:UV] = a$.
\end{lemma}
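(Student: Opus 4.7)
The plan is first to show that $UV$, $UW$ and $VW$ all have cardinality $|G|/a$, then to pin down $|U\cap V\cap W|$, and finally to use a symmetry argument to force the six products $UV, VU, UW, WU, VW, WV$ to coincide. The cardinality computation is immediate from the identity $|XY| = |G|[G:X\cap Y]/([G:X][G:Y])$ combined with $[G:X\cap Y] = \lcm([G:X],[G:Y])$ (since $\alpha(X,Y) = 1$) and the pairwise coprimality of $r_u,r_v,r_w$, which yields $\lcm(ar_u,ar_v) = ar_ur_v$ and likewise for the other two pairs, so that $|UV| = |UW| = |VW| = |G|/a$.

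The crux is establishing $[G:U\cap V\cap W] = ar_ur_vr_w$. The lower bound is free: the index is a common multiple of $[G:U\cap V] = ar_ur_v$ and $[G:W] = ar_w$, and $\gcd(r_ur_v,r_w) = 1$ forces it to be a multiple of $ar_ur_vr_w$. For the matching upper bound I would sandwich $(U\cap V)W$ using the set-size formula and the inclusion $(U\cap V)W \subseteq UW$: the formula reads $|(U\cap V)W| = |U\cap V||W|/|U\cap V\cap W|$, the inclusion gives $|(U\cap V)W| \leq |UW| = |G|/a$, and together they produce $|U\cap V\cap W| \geq |G|/(ar_ur_vr_w)$, hence equality throughout. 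As a bonus this forces $|(U\cap V)W| = |G|/a = |UW|$, so $(U\cap V)W = UW$; the same inclusion with $VW$ in place of $UW$ gives $(U\cap V)W = VW$, and therefore $UW = VW$.

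Cyclically permuting which of $U,V,W$ plays the right-hand role yields $UV = WV$ and $VU = WU$, and running the analogous sandwich on the other side with $W(U\cap V)$, $V(U\cap W)$ and $U(V\cap W)$ yields $WU = WV$, $VU = VW$ and $UV = UW$. Chaining these six equalities gives $UV = WV = WU = VU = VW = UW$, so all six products coincide; in particular $UV = VU$ makes $UV$ a subgroup, and its cardinality $|G|/a$ gives $[G:UV] = a$. The hard part is the upper bound on $[G:U\cap V\cap W]$, which is where all three $\alpha$-hypotheses and all three coprimality relations must be engaged simultaneously; everything else is formal symmetric chasing.
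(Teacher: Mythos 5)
Your proposal is correct and follows essentially the same route as the paper: both arguments hinge on the divisibility lower bound $ar_ur_vr_w \mid [G:U\cap V\cap W]$ played against the containment of a product like $(U\cap V)W$ (resp. $U(V\cap W)$ in the paper) inside the two pairwise products of cardinality $|G|/a$, which forces equality throughout and then yields all six identities by permuting the roles of $U$, $V$, $W$. The only cosmetic difference is that you first pin down $[G:U\cap V\cap W] = ar_ur_vr_w$ exactly and then deduce the set equalities, whereas the paper extracts both simultaneously from the inequality $|U(V\cap W)| = |G|\alpha/a \le |G|/a$.
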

\begin{proof}
Note that $[G:U \cap V \cap W]$ is divisible by $ar_u$, $ar_v$ and $ar_w$. Moreover, since $r_u$, $r_v$ and $r_w$ are pairwise coprime, $[G:U \cap V \cap W]$ is divisible by $ar_ur_vr_w$. Let $\alpha$ be the integer satisfying $[G:U \cap V \cap W] = ar_ur_vr_w\alpha$. Then
$$|U(V \cap W)| = \frac{|G|ar_ur_vr_w\alpha}{(ar_u)(ar_vr_w)} = \frac{|G|\alpha}{a} \geq \frac{|G|}{a}.$$
But since $U(V \cap W) \subseteq UV \cap UW$ and $|UV| = |UW| = \frac{|G|}{a}$ it follows that
$$UV = U(V \cap W) = UW.$$
This calculation can be carried out also permuting $U$, $V$ and $W$ and then
$$UV = UW = VW = VU.$$

Finally $[G:UV] = a$ follows immediately from $|UV| = \frac{|G|ar_ur_v}{(ar_u)(ar_v)}$.
\end{proof}

\begin{proposition}\label{prop_main}
Let $U_1$,...,$U_n$ and $V$ be subgroups of $G$. Let $[G:U_i] = a_ir_i$ for $1 \leq i \leq n$ and $[G:V] = ar$ such that $r_1$,...,$r_n$, $r$ are pairwise coprime. Assume that $\sum_{z = 1}^{n} \frac{1}{a_z} \geq 1$ and for any $1 \leq i \leq n$
\begin{itemize}
\item $a_i \mid a$ and $\gcd(\frac{a}{a_i},r_i) = 1$,
\item $\alpha(U_i,V) =1$.
\end{itemize}
Then $(U_1,...,U_n,V)$ is not harmonic.
\end{proposition}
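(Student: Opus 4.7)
The plan is to reduce the proposition to Lemma~\ref{lemma_main} applied to the given tuple $(U_1,\ldots,U_n,V)$. For this I need two things: the size condition $\sum_i |U_iV|\geq |G|$ and the containments $VU_i\subseteq U_jU_i$ for all $i<j$.

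For the size condition, I first compute $|U_iV|$. Since $\alpha(U_i,V)=1$, we have $[G:U_i\cap V]=\lcm(a_ir_i,ar)$. Using $a_i\mid a$, $\gcd(a/a_i,r_i)=1$, and $\gcd(r_i,r)=1$ (pairwise coprimality), a direct divisibility check gives $\lcm(a_ir_i,ar)=arr_i$. Thus
\[
|U_iV|=\frac{|G|\cdot arr_i}{a_ir_i\cdot ar}=\frac{|G|}{a_i}.
\]
Summing and using the hypothesis $\sum_z \frac{1}{a_z}\geq 1$ yields $\sum_i |U_iV|\geq |G|$, which is the first condition of Lemma~\ref{lemma_main}.

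The key step is establishing $VU_i\subseteq U_jU_i$ for $i<j$, and this is the main obstacle since the proposition gives no direct hypothesis on $\alpha(U_i,U_j)$. My idea is to prove the stronger equality $VU_i=(V\cap U_j)U_i$; since $(V\cap U_j)U_i$ is trivially contained in both $VU_i$ and $U_jU_i$, this suffices. By the same computation as above, $\alpha(U_j,V)=1$ gives $[G:V\cap U_j]=arr_j$. Now set $c=[G:V\cap U_j\cap U_i]$. The index $c$ is a multiple of $\lcm([G:V],[G:U_j],[G:U_i])=\lcm(ar,a_jr_j,a_ir_i)=arr_ir_j$ (again by the divisibility and coprimality assumptions), so $c\geq arr_ir_j$. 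On the other hand,
\[
|(V\cap U_j)U_i|=\frac{|V\cap U_j|\cdot |U_i|}{|V\cap U_j\cap U_i|}=\frac{|G|\cdot c}{arr_j\cdot a_ir_i}\leq |VU_i|=\frac{|G|}{a_i},
\]
forcing $c\leq arr_ir_j$. Hence $c=arr_ir_j$, so $|(V\cap U_j)U_i|=|G|/a_i=|VU_i|$, and the containment $(V\cap U_j)U_i\subseteq VU_i$ becomes an equality. This yields $VU_i=(V\cap U_j)U_i\subseteq U_jU_i$, as required.

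With both hypotheses of Lemma~\ref{lemma_main} verified, we conclude that $(U_1,\ldots,U_n,V)$ is not harmonic, completing the proof. The only subtle point is the size-pinching argument in Step~3: it replaces the lack of an $\alpha(U_i,U_j)$ hypothesis by exploiting the trivial inclusion $(V\cap U_j)U_i\subseteq VU_i$ to bound $c$ from above, matching the arithmetical lower bound exactly.
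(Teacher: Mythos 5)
Your proposal is correct and follows essentially the same route as the paper's own proof: both compute $|U_iV| = |G|/a_i$ from $\alpha(U_i,V)=1$ and the coprimality hypotheses, and both pin down $[G:U_i\cap U_j\cap V]$ to exactly $arr_ir_j$ by squeezing the trivially contained product set (your $(V\cap U_j)U_i\subseteq VU_i$, the paper's $(U_i\cap V)U_j\subseteq VU_j$ — the same step up to swapping $i$ and $j$) between the divisibility lower bound and the cardinality upper bound, before invoking Lemma~\ref{lemma_main}. No gaps.
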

\begin{proof}
Let $1 \leq i,j \leq n$. Our assumptions imply that the $\lcm(a_ir_i,ar) = ar_ir$ and hence also $[G:U_i \cap V] = ar_ir$ and $|U_iV| = \frac{|G|}{a_i}$. Moreover $\lcm(ar_ir, a_jr_j) = ar_ir_jr$, so $[G:U_i \cap U_j \cap V] = ar_ir_jr\alpha$ for some integer $\alpha$. Thus
$$|(U_i \cap V) U_j| = \frac{|G| ar_ir_jr\alpha}{(ar_ir)(a_jr_j)} = \frac{|G|\alpha}{a_j} \leq |VU_j| = \frac{|G|}{a_j}. $$
It follows that $VU_j = (U_i \cap V) U_j \subseteq U_iU_j$. Since
$$\sum_{z=1}^{n} |VU_z| = \sum_{z=1}^{n} \frac{|G|}{a_i} \geq |G|,$$
the tuple $(U_1,...,U_n,V)$ is not harmonic by Lemma~\ref{lemma_main}.
\end{proof}

\begin{corollary}\label{corol_AllAlpha1}
Let $U_1$,...,$U_n$ and $V$ be subgroups of $G$ such that $[G:U_i] = nr_i$ and $[G:V] = nr$, where $r_1$,..., $r_n$, $r$ are pairwise coprime. Assume further that $\alpha(U_i, V) = 1$ for any $1 \leq i \leq n$. Then $(U_1,...,U_n,V)$ is not harmonic.
\end{corollary}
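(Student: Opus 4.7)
The plan is to derive this corollary as a direct specialization of Proposition~\ref{prop_main} by choosing the free parameters $a$ and $a_1,\ldots,a_n$ all equal to $n$. With this choice the data provided by the corollary aligns precisely with the hypotheses of the proposition: the index of $V$ is $nr = ar$, the index of $U_i$ is $nr_i = a_ir_i$, and $r_1,\ldots,r_n,r$ are pairwise coprime by assumption.

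Next I would verify the remaining arithmetic conditions of Proposition~\ref{prop_main}. Since $a_i = a = n$, divisibility $a_i \mid a$ is immediate, and $\gcd(a/a_i, r_i) = \gcd(1, r_i) = 1$ holds trivially. The sum condition becomes
\[
\sum_{z=1}^{n} \frac{1}{a_z} = \sum_{z=1}^{n} \frac{1}{n} = 1 \geq 1,
\]
which is satisfied with equality. Finally, the hypothesis $\alpha(U_i, V) = 1$ for each $1 \leq i \leq n$ is exactly what the corollary assumes.

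With every hypothesis of Proposition~\ref{prop_main} verified, its conclusion yields that $(U_1, \ldots, U_n, V)$ is not harmonic, which is precisely what we want. The main obstacle is essentially nonexistent here: the work was already done in the proof of Proposition~\ref{prop_main}, and the only step is to recognize that the symmetric configuration $[G:U_i] = [G:V] = n \cdot (\text{coprime factor})$ falls into the proposition's scheme with the trivial choice $a = n$.
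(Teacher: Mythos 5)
Your proposal is correct and matches the paper's (implicit) intent exactly: the corollary is stated immediately after Proposition~\ref{prop_main} precisely because it is the specialization $a_1 = \cdots = a_n = a = n$, and your verification of the hypotheses ($\sum_z 1/a_z = 1$, $a_i \mid a$, $\gcd(a/a_i, r_i) = 1$) is the whole argument.
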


We next study the relations between triples of subgroups and the indices of their intersections.

\begin{lemma}\label{lemma_ThreeTupleCase}
Let $U$, $V$, $W$ be subgroups of a group $G$ such that $(U,V,W)$ is harmonic.
Assume also that
$$[G:U] = ar_u, \ \ [G:V] = ar_v, \ \ [G:W] = ar_w,$$
where $r_u$, $r_v$, $r_w$ are pairwise coprime. Let $b$ be a positive integer such that $\alpha(U, W) = b$.
\begin{itemize}
\item[a)] If  $\alpha(U, V) = 1$ then $b \leq \alpha(V, W) \gcd(r_v,b)$.
\item[b)] If  $\alpha(U, V) = 1$ and $b = \alpha(V, W) \gcd(r_v,b)$ then $|UV| + |VW| < |G|$.
\item[c)] If $\alpha(U, V) = b$ then $[G:U \cap V \cap W]$ is divisible by $abr_ur_vr_w$.
\end{itemize}
\end{lemma}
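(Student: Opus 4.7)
The plan is to treat the three parts separately but through a common computation of $[G:U\cap V\cap W]$. Setting $c=\alpha(V,W)$, I will use throughout that $[G:U\cap V\cap W]$ is divisible by each of $[G:U\cap V]$, $[G:U\cap W]$, and $[G:V\cap W]$, whose values are determined by $\alpha(U,V)$, $b=\alpha(U,W)$, and $c$.

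Part (c) is essentially $\lcm$ bookkeeping: under $\alpha(U,V)=b=\alpha(U,W)$, the indices $[G:U\cap V]=ar_ur_vb$ and $[G:U\cap W]=ar_ur_wb$ have $\lcm$ equal to $ar_ur_vr_wb=abr_ur_vr_w$ (using $\gcd(r_v,r_w)=1$), and this must divide $[G:U\cap V\cap W]$. There is no obstacle here.

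For (a), assuming $\alpha(U,V)=1$, the pairwise coprimality of $r_u,r_v,r_w$ forces $[G:U\cap V\cap W]=ar_ur_vr_w m$ for some positive integer $m$. Two constraints on $m$ give the inequality: divisibility by $[G:U\cap W]=ar_ur_wb$ forces $b/\gcd(r_v,b)$ to divide $m$, while the inclusion $U(V\cap W)\subseteq UV$, translated via $|U(V\cap W)|=|G|m/(ac)$ and $|UV|=|G|/a$, gives $m\leq c$. Combining yields $b\leq c\gcd(r_v,b)$, i.e.\ (a). Note that this step does not actually use the harmonicity of $(U,V,W)$.

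The heart of the proof is (b). Under $b=c\gcd(r_v,b)$, the two bounds on $m$ coincide, so $m=c$, hence $|U(V\cap W)|=|UV|$ and $U(V\cap W)=UV$. Taking inverses of both sides then gives $(V\cap W)U=VU$, and in particular $VU\subseteq WU$. To conclude $|UV|+|VW|<|G|$ I argue by contradiction: if $|UV|+|VW|\geq|G|$, then Lemma~\ref{lemma_main} applied with $U_1=U$, $U_2=W$ and distinguished subgroup $V$ shows that $(U,W,V)$ is not harmonic, since the size hypothesis $|UV|+|WV|\geq|G|$ holds (recall $|WV|=|VW|$) and the inclusion $VU_1\subseteq U_2U_1$ is exactly $VU\subseteq WU$ just established. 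This contradicts the harmonicity of $(U,V,W)$. The main obstacle is spotting that the chain $m=c \Rightarrow UV=U(V\cap W) \Rightarrow VU\subseteq WU$ produces exactly the inclusion needed to apply Lemma~\ref{lemma_main}; once noticed, everything else is straightforward index arithmetic.
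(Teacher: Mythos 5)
Your proof is correct and takes essentially the same approach as the paper: where the paper bounds $|W(U\cap V)|$ against $|WV|$ to extract the inequality in a) and the inclusion $WV\subseteq WU$ for b) (then cites Corollary~\ref{corollary_main}), you run the mirror-image computation with $|U(V\cap W)|$ against $|UV|$ to get $VU\subseteq WU$ and cite Lemma~\ref{lemma_main} directly, and part c) is the same $\lcm$ bookkeeping. Your side remark that a) does not actually use harmonicity is also accurate.
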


\begin{proof}
From
$$\lcm([G:U \cap W], [G:V]) = ar_ur_w\lcm(r_v,b)$$
we have
$$[G:U\cap V \cap W] = ar_ur_w\lcm(r_v,b)\alpha$$
for a certain integer $\alpha$. So
\begin{align}\label{eq1}
|W(U \cap V)| = \frac{|G|ar_ur_w\lcm(r_v,b)\alpha}{(ar_w)(ar_ur_v)} = \frac{|G|\lcm(r_v,b)\alpha}{ar_v} \leq |WV| = \frac{|G|\alpha(V,W)}{a}.
\end{align}
Since
$$\frac{\lcm(r_v,b)}{r_v} = \frac{b}{\gcd(r_v,b)}$$
inequality \eqref{eq1} implies $b \leq \alpha(V ,W)\gcd(r_v,b)$. Using again \eqref{eq1} this implies that if $b = \alpha(V,W)\gcd(r_v,b)$ then $WV = W(U \cap V) \subseteq WU$. Since $(U,V,W)$ is harmonic this implies $|UV| + |VW| < |G|$ by Corollary~\ref{corollary_main}.

For c) note that $[G : U \cap V \cap W]$ is divisible by $ar_ur_v\lcm(r_w,b)$ and by $ar_ur_w\lcm(r_v,b)$. Since $r_v$ and $r_w$ are coprime $[G : U \cap V \cap W]$ is hence divisible by $abr_ur_vr_w$.
\end{proof}

\begin{corollary}\label{corollary_OneSmallTwoBig}
Let $U$, $V$ and $W$ be subgroups of $G$ such that
$$[G:U] = ar_u, \ \ [G:V] = ar_v, \ \ [G:W] = ar_w,$$
where $r_u$, $r_v$, $r_w$ are pairwise coprime. Assume that $\alpha(U,V) = 1$ and $\alpha(U,W) = \alpha(V,W)$. If $|UV| + |VW| \geq |G|$ then
 $(U,V,W)$ is not harmonic.
\end{corollary}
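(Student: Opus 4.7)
The plan is to verify directly the hypotheses of Lemma~\ref{lemma_main} with $n=2$, $U_1=U$, $U_2=W$ and auxiliary subgroup $V$. The size condition $|UV|+|WV|\ge |G|$ is the given hypothesis (as $|WV|=|VW|$), so the real work is to establish the containment $VU\subseteq WU$.

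First I would record the standard index data implied by the assumptions. From $\alpha(U,V)=1$ and $\gcd(r_u,r_v)=1$ one obtains $[G:U\cap V]=ar_ur_v$ and $|UV|=|G|/a$; from $\alpha(U,W)=\alpha(V,W)=b$ one obtains $[G:U\cap W]=abr_ur_w$, $[G:V\cap W]=abr_vr_w$, and $|UW|=|VW|=|G|b/a$.

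The heart of the proof is the divisibility
\[
abr_ur_vr_w\ \Big|\ [G:U\cap V\cap W].
\]
To see it, one notes that $[G:U\cap V\cap W]$ is divisible by both $\lcm([G:U\cap W],[G:V])=abr_ur_vr_w/\gcd(b,r_v)$ and $\lcm([G:V\cap W],[G:U])=abr_ur_vr_w/\gcd(b,r_u)$; the least common multiple of these two integers is $abr_ur_vr_w/\gcd(\gcd(b,r_u),\gcd(b,r_v))$, and the inner gcd divides $\gcd(r_u,r_v)=1$. This is essentially the bookkeeping behind part~(c) of Lemma~\ref{lemma_ThreeTupleCase}, applied to the symmetric pair $\alpha(U,W)=\alpha(V,W)=b$ rather than $\alpha(U,V)=\alpha(U,W)=b$, which is why that part does not immediately apply off the shelf.

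Writing $[G:U\cap V\cap W]=abr_ur_vr_w\alpha'$ for an integer $\alpha'\ge 1$, a direct computation gives $|U(V\cap W)|=|G|\alpha'/a$. But $U(V\cap W)\subseteq UV$ and $|UV|=|G|/a$, which forces $\alpha'=1$ and $U(V\cap W)=UV$. Since trivially $U(V\cap W)\subseteq UW$, we conclude $UV\subseteq UW$, and taking inverses $VU\subseteq WU$. Lemma~\ref{lemma_main} (equivalently Corollary~\ref{corollary_main} after swapping $U$ and $V$) then yields that $(U,W,V)$, hence $(U,V,W)$, is not harmonic. The only non-routine step is the divisibility $abr_ur_vr_w\mid[G:U\cap V\cap W]$: it is short once the correct pair of double intersections is chosen, but it is the real content of the argument.
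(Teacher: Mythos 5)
Your proof is correct, but it follows a genuinely different route from the paper's. The paper disposes of the corollary in two lines by citing Lemma~\ref{lemma_ThreeTupleCase}: part a) is invoked to get $\gcd(r_v,\alpha(U,W))=1$, after which part b) (whose proof works with the product $W(U\cap V)$ and the single divisibility $[G:U\cap V\cap W]=ar_ur_w\lcm(r_v,b)\alpha$) gives $|UV|+|VW|<|G|$, contradicting the hypothesis. You instead work with $U(V\cap W)$ and establish the stronger, unconditional divisibility $abr_ur_vr_w\mid [G:U\cap V\cap W]$ by intersecting the information from the two pairs $(U\cap W, V)$ and $(V\cap W, U)$ and using $\gcd(r_u,r_v)=1$; this forces $UV=U(V\cap W)\subseteq UW$ and Lemma~\ref{lemma_main} finishes. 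One small remark: your claim that Lemma~\ref{lemma_ThreeTupleCase}c) ``does not immediately apply off the shelf'' is not quite right --- its hypothesis $\alpha(U,V)=\alpha(U,W)=b$ becomes exactly your $\alpha(W,U)=\alpha(W,V)=b$ after the permutation $(U,V,W)\mapsto(W,U,V)$, and its conclusion is symmetric in the three subgroups, so you could have cited it directly; but your self-contained verification is the same computation. The trade-off is worth noting: your argument never needs the condition $\gcd(r_v,\alpha(U,W))=1$, which in the published proof is the one step extracted from part a) that requires the reader to unwind how the inequality $b\le\alpha(V,W)\gcd(r_v,b)$ is being used when $\alpha(U,W)=\alpha(V,W)$; your version makes the corollary independent of that reading and is, if anything, more robust.
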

\begin{proof}
Assume to the contrary that $(U,V,W)$ is harmonic. Then by Lemma~\ref{lemma_ThreeTupleCase}a) we know that $\gcd(r_v, \alpha(U, W)) = 1$. Hence Lemma~\ref{lemma_ThreeTupleCase}b) implies $|UV| + |VW| < |G|$, contradicting our assumption.
\end{proof}

\section{Specific Indices}\label{sec_Proofs}
In this section we prove restrictions, for a group $G$, on $G$-harmonic $n$-tuples for $n\leq 5$. This will include the proofs of Theorem A and B. Recall that if $U_1$, $U_2$ are subgroups of $G$ such that $(U_1,U_2)$ is harmonic, then $U_1U_2 \neq G$ by Lemma~\ref{th:CRT1}. We will use this several times without further mention.

\begin{lemma}\label{lemma_gcd2}
Let $U_1$, $U_2$ be subgroups of $G$ such that $(U_1,U_2)$ is harmonic. Assume that $|G:U_i|=2r_i$ for $1 \leq i \leq 2$, where $r_1$, $r_2$ are coprime. Then $\alpha(U_1,U_2) = 1$. Moreover $|U_1U_2| = \frac{|G|}{2}$ and $[G:(U_1 \cap U_2)] = 2r_1r_2$.
\end{lemma}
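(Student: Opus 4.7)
The plan is to compute everything via the definition of $\alpha(U_1,U_2)$ and then use Lemma~\ref{th:CRT1} to rule out the only nontrivial possibility.

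First I would observe that since $r_1,r_2$ are coprime, $\lcm([G:U_1],[G:U_2])=\lcm(2r_1,2r_2)=2r_1r_2$. Hence by definition of $\alpha(U_1,U_2)$, we have $[G:U_1\cap U_2]=2r_1r_2\cdot\alpha(U_1,U_2)$. Applying the product formula
\[
|U_1U_2|=\frac{|G|[G:U_1\cap U_2]}{[G:U_1][G:U_2]}
\]
from the basic lemma on subgroup products then yields $|U_1U_2|=\tfrac{|G|\,\alpha(U_1,U_2)}{2}$.

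Next I would bound $\alpha(U_1,U_2)$. On the one hand, $U_1U_2\subseteq G$ forces $|U_1U_2|\leq|G|$, which gives $\alpha(U_1,U_2)\leq 2$. On the other hand, because $(U_1,U_2)$ is harmonic, Lemma~\ref{th:CRT1} implies $U_1U_2\neq G$, so in fact $|U_1U_2|<|G|$ and therefore $\alpha(U_1,U_2)<2$. Since $\alpha(U_1,U_2)$ is a positive integer, we conclude $\alpha(U_1,U_2)=1$.

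The remaining assertions are immediate consequences: substituting $\alpha(U_1,U_2)=1$ back into the two formulas above gives $|U_1U_2|=\tfrac{|G|}{2}$ and $[G:U_1\cap U_2]=2r_1r_2$. There is essentially no obstacle here; the argument is a direct verification, and the only place where the harmonicity hypothesis enters is via the Chinese-remainder-type statement $U_1U_2\neq G$ of Lemma~\ref{th:CRT1}, which cleanly eliminates the $\alpha=2$ case.
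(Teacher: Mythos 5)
Your proof is correct and follows essentially the same route as the paper: compute $\lcm(2r_1,2r_2)=2r_1r_2$, apply the product formula to get $|U_1U_2|=\frac{|G|\alpha(U_1,U_2)}{2}$, and use $U_1U_2\neq G$ (from Lemma~\ref{th:CRT1} and harmonicity) to force $\alpha(U_1,U_2)=1$. No issues.
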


\begin{proof} Since $(U_1,U_2)$ is harmonic we have $U_1U_2 \neq G$. Note that
$$\lcm([G:U_1],[G:U_2]) = 2r_1r_2,$$
thus $[G:(U_1 \cap U_2)] = 2r_1r_2\alpha(U_1,U_2)$. Since
\[ |U_1U_2| =  \frac{|G|[G:U_1 \cap U_2]}{[G:U_1][G:U_2]} = \frac{|G|2r_1r_2\alpha(U_1,U_2)}{4r_1r_2} \]
and $|U_1U_2| < |G|$ we obtain $\alpha(U_1,U_2) = 1$ and the result follows.\\
\end{proof}

The following is now an immediate application of the preceding lemma and Corollary~\ref{corol_AllAlpha1}. It was also proved by Zhu \cite[Theorem 2.2]{Zhu}.

\begin{proposition}\label{prop_gcd2}
Let $r_1$, $r_2$, $r_3$ be pairwise coprime integers.
Then for any group $G$, the $3$-tuple $(2r_1,2r_2,2r_3)$ is not $G$-harmonic. 
\end{proposition}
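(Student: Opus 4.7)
The plan is to argue by contradiction using the two tools the paper has just set up: Lemma~\ref{lemma_gcd2}, which forces $\alpha(U_i,U_j)=1$ on pairs of subgroups whose indices are of the form $2r_i,2r_j$ with $\gcd(r_i,r_j)=1$, and Corollary~\ref{corol_AllAlpha1}, which turns the vanishing of $\alpha$ on all pairs of a specific index shape into non-harmonicity of the whole tuple.

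Concretely, I would first suppose, toward a contradiction, that $(2r_1,2r_2,2r_3)$ is $G$-harmonic. Unpacking the definition produces subgroups $U_1,U_2,U_3\leq G$ with $[G:U_i]=2r_i$ and elements $g_1,g_2,g_3\in G$ such that the cosets $g_iU_i$ pairwise intersect trivially. In particular, each pair $(U_i,U_j)$ with $i\neq j$ is harmonic.

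Next, since $r_1,r_2,r_3$ are pairwise coprime, each pair $(r_i,r_j)$ with $i\neq j$ is coprime, so Lemma~\ref{lemma_gcd2} applies to every such pair and yields $\alpha(U_i,U_j)=1$ for all $i\neq j$. At this point I set $V:=U_3$, $r:=r_3$, and $n=2$ in Corollary~\ref{corol_AllAlpha1}: the hypotheses $[G:U_1]=2r_1$, $[G:U_2]=2r_2$, $[G:V]=2r$ with $r_1,r_2,r$ pairwise coprime are met, and I have just verified $\alpha(U_i,V)=1$ for $i=1,2$. The corollary then declares $(U_1,U_2,V)=(U_1,U_2,U_3)$ not harmonic, contradicting the choice of the $g_i$.

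Since the two ingredients are already in hand, there is no real obstacle; the only thing to double-check is the indexing convention of Corollary~\ref{corol_AllAlpha1} (namely that the number of $U_i$'s matches the common factor $n=2$ appearing in the indices), which is exactly why it is legitimate to cast $U_3$ in the role of $V$ rather than simply treat the triple symmetrically.
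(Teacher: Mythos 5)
Your proposal is correct and matches the paper's own proof essentially verbatim: assume harmonicity, apply Lemma~\ref{lemma_gcd2} to each pair to get $\alpha(U_i,U_j)=1$, then invoke Corollary~\ref{corol_AllAlpha1} (with $n=2$ and $U_3$ in the role of $V$) to reach a contradiction. Your extra care about the indexing convention of the corollary is sensible but does not change the argument.
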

\begin{proof}
Assume to the contrary that there exist $U_1$, $U_2$, $U_3$ subgroups of $G$ such that $(U_1,U_2,U_3)$ is harmonic where $[G:U_i]=2r_i$ for $1\leq i\leq 3$. Then by Lemma~\ref{lemma_gcd2} we have $\alpha(U_i, U_j) = 1$ for any $1 \leq i, j \leq 3$. But then Corollary~\ref{corol_AllAlpha1} implies that $(U_1,U_2,U_3)$ is not harmonic.
\end{proof}

The following has also been proven by Zhu \cite[Theorem 3.1]{Zhu}.
\begin{proposition}\label{prop_4Tuple3Case}
Let $r_1$, $r_2$, $r_3$, $r_4$ be pairwise coprime integers.
Then for any group $G$, the $4$-tuple $(3r_1,3r_2,3r_3,3r_4)$ is not $G$-harmonic. 
\end{proposition}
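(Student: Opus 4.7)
Suppose for contradiction that $(U_1,U_2,U_3,U_4)$ is harmonic with $[G:U_i]=3r_i$. Since the $r_i$ are pairwise coprime, $\lcm([G:U_i],[G:U_j])=3r_ir_j$ and $|U_iU_j|=|G|\alpha(U_i,U_j)/3$, so the condition $U_iU_j\subsetneq G$ forces $\alpha(U_i,U_j)\in\{1,2\}$ for every pair. If some $U_i$ satisfies $\alpha(U_i,U_j)=1$ for every $j\neq i$, then Corollary~\ref{corol_AllAlpha1} (taken with $n=3$, the three remaining subgroups in the role of $U_1,U_2,U_3$, and $V=U_i$) immediately contradicts harmonicity. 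I may therefore assume that every $U_i$ admits at least one $j$ with $\alpha(U_i,U_j)=2$.

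I next argue that in fact every pair satisfies $\alpha(U_i,U_j)=2$. Suppose not, say $\alpha(U_i,U_k)=1$, and pick $j\neq k$ with $\alpha(U_i,U_j)=2$ and $l\neq i$ with $\alpha(U_k,U_l)=2$. Applying Lemma~\ref{lemma_ThreeTupleCase}(a) to the triple $(U_i,U_k,U_j)$ with $b=\alpha(U_i,U_j)=2$: if $r_k$ is odd then $\gcd(r_k,2)=1$ forces $\alpha(U_k,U_j)=2$, the hypothesis of part (b) then holds, and we derive $|U_iU_k|+|U_kU_j|<|G|$, i.e.\ $|G|/3+2|G|/3<|G|$, which is absurd. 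Hence $r_k$ is even; by the same argument applied to the triple $(U_k,U_i,U_l)$, $r_i$ is also even. This contradicts $\gcd(r_i,r_k)=1$, so every pair has $\alpha(U_i,U_j)=2$ and $|U_iU_j|=2|G|/3$.

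In this uniform configuration, write $[G:U_i\cap U_j\cap U_k]=6r_ir_jr_k\gamma_{ijk}$; the inclusion $(U_i\cap U_j)U_k\subseteq U_iU_k$ gives $\gamma_{ijk}\in\{1,2\}$. A direct calculation shows that $\gamma_{ijk}=2$ forces the equalities $U_iU_k=U_jU_k$, $U_jU_i=U_kU_i$, and $U_iU_j=U_kU_j$. Thus, attempting to apply Lemma~\ref{lemma_main} with $V=U_s$ and the other three subgroups (in any order), the needed inclusions $U_sU_i\subseteq U_jU_i$ for $i<j$, which since all products have size $2|G|/3$ reduce to equalities $U_sU_i=U_jU_i$, are guaranteed as soon as all three triples containing $s$ are ``good'' (i.e.\ have $\gamma_{ijk}=2$). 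Since each vertex of $\{1,2,3,4\}$ is contained in exactly three of the four triples, having at most one bad triple ensures that some vertex $s$ lies only in good triples, and then Lemma~\ref{lemma_main} contradicts harmonicity.

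The main obstacle is the remaining case, in which at least two triples have $\gamma_{ijk}=1$: then every vertex is contained in at least one bad triple and no choice of $V$ satisfies the clean hypothesis of Lemma~\ref{lemma_main} via the $\gamma$-analysis alone. I expect to rule this out by a finer argument combining the identity $|(U_i\cap U_j)U_k|=|G|/3$ valid for each bad triple, the equalities forced by any good triples, and the forbidden-element constraints $g_i^{-1}g_j\notin U_iU_j$ from harmonicity, into a pigeon-hole obstruction modelled on the $\mathbb{Z}$-analogue: there, four arithmetic progressions with differences all divisible by $3$ cannot be pairwise disjoint, because the four shifts would have to realize four pairwise distinct residues modulo $3$.
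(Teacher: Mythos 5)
Your reduction to the case $\alpha(U_i,U_j)=2$ for all pairs is correct and runs essentially parallel to the paper's own argument: your use of Lemma~\ref{lemma_ThreeTupleCase}(a),(b) to force both $r_i$ and $r_k$ to be even is in substance the paper's Corollary~\ref{corollary_OneSmallTwoBig}, and your appeal to Corollary~\ref{corol_AllAlpha1} matches the paper's use of Proposition~\ref{prop_main}. The problem is the endgame, where you explicitly leave a case unresolved --- and that case is in fact the \emph{only} one that can occur. You overlooked that $\gamma_{ijk}=2$ is itself already contradictory: it gives $(U_i\cap U_j)U_k=U_iU_k=U_jU_k$, hence (taking inverses) $U_jU_i=U_kU_i$, and since $|U_iU_j|+|U_kU_j|=\frac{4|G|}{3}\geq|G|$, Corollary~\ref{corollary_main} shows $(U_j,U_i,U_k)$ is not harmonic. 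So every triple is ``bad'' in your terminology, the branch of your argument that invokes Lemma~\ref{lemma_main} is vacuous, and the pigeonhole argument you only sketch is the entire remaining content of the proof. As sketched it does not work: the $\mathbb{Z}$-analogue with residues modulo $3$ has no direct counterpart here, since the sets $U_iU_j$ are not subgroups and there is no common index-$3$ subgroup to reduce modulo.

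The paper closes this case by an inclusion--exclusion count on the three subsets $U_1U_2$, $U_1U_3$, $U_1U_4$ of $G$. Since $\gamma_{ijk}=1$ always, one has $|U_i(U_j\cap U_k)|=\frac{|G|}{3}$, and by Lemma~\ref{lemma_DoubleCosets} the intersection $U_iU_j\cap U_iU_k$ has cardinality a multiple of $\frac{|G|}{3}$; it cannot equal $\frac{2|G|}{3}$, for then $U_iU_j=U_iU_k$ and Corollary~\ref{corollary_main} applies as above. Hence $|U_iU_j\cap U_iU_k|=\frac{|G|}{3}$ exactly, so $|U_iU_j\cup U_iU_k|=\frac{2|G|}{3}+\frac{2|G|}{3}-\frac{|G|}{3}=|G|$, i.e.\ $G=U_iU_j\cup U_iU_k$ for each pair. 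Lemma~\ref{lemma_Subsets} then yields
\begin{equation*}
|U_1U_2\cap U_1U_3\cap U_1U_4|=|U_1U_2|-|G\setminus U_1U_3|-|G\setminus U_1U_4|=\frac{2|G|}{3}-\frac{|G|}{3}-\frac{|G|}{3}=0,
\end{equation*}
which is absurd because $U_1$ lies in all three sets. You would need to supply an argument of this kind (or another complete one) for the all-$\gamma=1$ configuration before your proof is complete.
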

\begin{proof}
Assume to the contrary that there exist $U_1$, $U_2$, $U_3$, $U_4$ subgroups of $G$ such that $(U_1,U_2,U_3,U_4)$ is harmonic where $[G:U_i]=3r_i$ for $1\leq i\leq 4$. Set $\alpha(i,j) = \alpha(U_i,U_j)$ for any $1 \leq i,j \leq 4$. Then
$$|U_iU_j| = \frac{|G|\alpha(i,j)}{3}$$
and so $\alpha(i,j) \leq 2$. Let $\{i,j,k,\ell\} = \{1,2,3,4\}$. We first observe

\textbf{Claim 1:} Not exactly two of $\alpha(i,j), \alpha(i,k), \alpha(j,k)$ equal $2$. \\
Assume w.l.o.g. $\alpha(i,j) = 1$ and $\alpha(i,k) = \alpha(j,k) = 2$. Then
$$|U_iU_j| + |U_jU_k| = \frac{|G|}{3} + \frac{2|G|}{3} = |G|$$
and therefore Corollary~\ref{corollary_OneSmallTwoBig} implies that $(U_i,U_j,U_k)$ is not harmonic.

\textbf{Claim 2:} Not exactly one of $\alpha(i,j), \alpha(i,k), \alpha(j,k)$ equals $2$.\\
Assume w.l.o.g. that $\alpha(i,j)=2$ while $\alpha(i,k) = \alpha(j,k) = 1$. Then Lemma~\ref{lemma_ThreeTupleCase}a) implies
$$2 = \alpha(i,j) \leq \alpha(i,k) \gcd(r_k,2) = \gcd(r_k,2).$$
Hence $r_k$ is even. We already know that $\alpha(i,\ell) = \alpha(j,\ell)$ by Claim 1. If $\alpha(i,\ell) = 1$, then as before $r_\ell$ must be even, contradicting that $r_k$ and $r_\ell$ are coprime. So $\alpha(i,\ell) = \alpha(j,\ell) = 2$. Then by Claim 1 we have $\alpha(k,\ell) = 1$, since otherwise exactly two of $\alpha(j,k)$, $\alpha(j,\ell)$, $\alpha(k,\ell)$ would equal $2$. Thus $\alpha(i,k) = \alpha(j,k) = \alpha(\ell,k) = 1$ and $(U_1,...,U_4)$ is not harmonic by Proposition~\ref{prop_main} (setting $U_k = V$). This proves the claim.

If $\alpha(1,2) = \alpha(1,3) =  \alpha(1,4) = 1$ we can apply Proposition~\ref{prop_main}, so one of them must equal $2$ and by the above $\alpha(x,y) = 2$ for all $1 \leq x,y, \leq 4$.

In this case we will consider the intersections of $U_1U_2$, $U_1U_3$, $U_1U_4$. By Lemma~\ref{lemma_ThreeTupleCase}c) we know that
$$[G:U_i \cap U_j \cap U_k] = 6r_ir_jr_k\alpha$$
for some integer $\alpha$. So by Lemma~\ref{lemma_DoubleCosets} $|U_iU_j \cap U_iU_k|$ is a multiple of
$$|U_i(U_j \cap U_k)| = \frac{|G|\alpha}{3},$$
so in particular of $\frac{|G|}{3}$. If $U_iU_j = U_iU_k$ then by Corollary~\ref{corollary_main} the tuple $(U_i,U_j,U_k)$ is not harmonic. Hence $\alpha = 1$ and $|U_iU_j \cap U_iU_k| = \frac{|G|}{3}$.

Since $|U_iU_j| = |U_iU_k| = \frac{2|G|}{3}$ we obtain $G = U_iU_j \cup U_iU_k$ for order reasons, i.e. because $U_iU_j$ and $U_iU_k$ are big while their intersection is small. So considering $U_1U_2$, $U_1U_3$ and $U_1U_4$ as subsets of $G$ and using Lemma~\ref{lemma_Subsets} we obtain
$$|U_1U_2 \cap U_1U_3 \cap U_1U_4| = |U_1U_2| - |G \setminus U_1U_3| - |G \setminus U_1U_4| = \frac{2|G|}{3} - \frac{|G|}{3} - \frac{|G|}{3} = 0.$$
A contradiction, since clearly this intersection contains $U_1$.
\end{proof}

We continue with the last case relevant for 4-tuples of subgroups. We will use the following lemma to handle it.

\begin{lemma}\label{lemma_244}
Let $U_1$, $U_2$, $U_3$ be subgroups of $G$ such that $[G:U_1] = 2r_1$ and $[G:U_i] = 4r_i$ for $2 \leq i \leq 3$, where $r_1$, $r_2$, $r_3$ are pairwise coprime and $r_1$ is odd. Assume also that $(U_1,U_2,U_3)$ is harmonic. Then
\begin{itemize}
\item $\alpha(U_2, U_3) \neq 2$.
\item If $\alpha(U_2, U_3) = 3$ then $3$ divides $r_1$ and $|U_2(U_1 \cap U_3)| = |U_2U_1 \cap U_2U_3| = \frac{|G|}{4}$.
\end{itemize}
\end{lemma}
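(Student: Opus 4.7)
My plan is to introduce an integer $\beta \geq 1$ defined by $[G:U_1 \cap U_2 \cap U_3] = 4r_1r_2r_3\beta$ and derive sharp upper and lower bounds on $\beta$ by comparing sizes of products $U_i(U_j \cap U_k)$ with the sizes of products $U_iU_j$; the contradictions will come from Corollary~\ref{corollary_main}.

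First I would set up the basic data. Since $r_1$ is odd and coprime to $r_i$, we have $\lcm([G:U_1],[G:U_i]) = 4r_1r_i$ for $i \in \{2,3\}$, so $|U_1U_i| = |G|\alpha(U_1,U_i)/2$; harmonicity forces $\alpha(U_1,U_i)=1$ and $|U_1U_i| = |G|/2$. Similarly $|U_2U_3| = |G|\alpha(U_2,U_3)/4$, so $\alpha(U_2,U_3) \in \{1,2,3\}$. Because $\lcm([G:U_1],[G:U_2],[G:U_3]) = 4r_1r_2r_3$, the integer $\beta$ is well-defined with $\beta \geq 1$, and one computes
\[ |U_3(U_1 \cap U_2)| = \frac{|G|\beta}{4}, \qquad |U_2(U_1 \cap U_3)| = \frac{|G|\beta}{4}. \]

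For the first statement, assume for contradiction that $\alpha(U_2,U_3) = 2$. Then $[G:U_2 \cap U_3] = 8r_2r_3$ and, using that $r_1$ is odd, $\lcm(2r_1, 8r_2r_3) = 8r_1r_2r_3$ divides $[G:U_1 \cap U_2 \cap U_3]$, forcing $\beta \geq 2$. On the other hand $|U_3(U_1 \cap U_2)| \leq |U_3U_1| = |G|/2$ forces $\beta \leq 2$, hence $\beta = 2$ and $U_3(U_1 \cap U_2) = U_3U_1 = U_3U_2$, all of size $|G|/2$. Inverting gives $U_1U_3 = U_2U_3$, and Corollary~\ref{corollary_main} applied with $V = U_3$, $U = U_1$, $W = U_2$ contradicts harmonicity of $(U_1,U_2,U_3)$.

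For the second statement, assume $\alpha(U_2,U_3) = 3$. If $3 \nmid r_1$, then $\lcm(2r_1,12r_2r_3) = 12r_1r_2r_3$, so $\beta \geq 3$, contradicting $\beta \leq 2$ as above; hence $3 \mid r_1$. To pin down $|U_2U_1 \cap U_2U_3|$, I would invert and apply Lemma~\ref{lemma_DoubleCosets}(2): $U_1U_2 \cap U_3U_2$ is a union of $(U_1 \cap U_3)$--$U_2$ double cosets, each of size a multiple of $|G|/4$ by Lemma~\ref{lemma_DoubleCosets}(1). The lower bound $|U_2U_1 \cap U_2U_3| \geq |U_2U_1| + |U_2U_3| - |G| = |G|/4$ and upper bound $|U_2U_1| = |G|/2$ leave only two possibilities; the value $|G|/2$ would give $U_2U_1 \subseteq U_2U_3$, hence $U_1U_2 \subseteq U_3U_2$, contradicting harmonicity via Corollary~\ref{corollary_main} with $V = U_2$, $U = U_1$, $W = U_3$. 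Therefore $|U_2U_1 \cap U_2U_3| = |G|/4$, and since $U_2(U_1 \cap U_3) \subseteq U_2U_1 \cap U_2U_3$ together with $|U_2(U_1 \cap U_3)| = |G|\beta/4 \geq |G|/4$ forces equality throughout.

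The main obstacle is the last step: the size of $U_2U_1 \cap U_2U_3$ is not pinned down by inclusion--exclusion alone, and it is essential to exploit the divisibility coming from the double coset decomposition (Lemma~\ref{lemma_DoubleCosets}) to reduce to just two candidate values, one of which is ruled out by Corollary~\ref{corollary_main}. All earlier steps are routine $\lcm$ and index manipulations once the bookkeeping with $\beta$ is in place.
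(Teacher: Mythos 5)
Your proof is correct and follows essentially the same route as the paper: compute $|U_i(U_j\cap U_k)|$ from the index of the triple intersection, compare with $|U_iU_j|$, and derive contradictions from Corollary~\ref{corollary_main}, using Lemma~\ref{lemma_DoubleCosets} to force $|U_2U_1\cap U_2U_3|$ to be a multiple of $\frac{|G|}{4}$. The only real deviation is your direct $\lcm$ argument for $3\mid r_1$ (showing $\beta\geq 3$ would contradict $\beta\leq 2$), where the paper instead cites Lemma~\ref{lemma_ThreeTupleCase}a); your version is self-contained and actually sidesteps the fact that writing $4r_2=2(2r_2)$ and $4r_3=2(2r_3)$ does not literally meet that lemma's pairwise-coprimality hypothesis.
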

\begin{proof}
By Lemma~\ref{lemma_gcd2} we know $|U_1U_2| = |U_1U_3| = \frac{|G|}{2}$.

First assume $\alpha(U_2, U_3) = 2$. This implies $|U_2U_3| = \frac{|G|}{2}$. Moreover
$$[G : U_1 \cap (U_2 \cap U_3)] = 8r_1r_2r_3\alpha$$
for some integer $\alpha$. Then
$$|U_1(U_2 \cap U_3)| = \frac{|G|8r_1r_2r_3\alpha}{(2r_1)(8r_2r_3)} = \frac{|G|\alpha}{2} \leq \frac{|G|}{2}.$$
So $U_1U_2 = U_1U_3$ and hence $(U_1,U_2,U_3)$ is not harmonic by Corollary~\ref{corollary_main}. Thus $\alpha(U_2, U_3) \neq 2$.	

Next assume $\alpha(U_2, U_3) = 3$, so $|U_2U_3| = \frac{3|G|}{4}$. Then Lemma~\ref{lemma_ThreeTupleCase}a) implies that $3$ divides $r_1$. Moreover, setting $[G:U_1 \cap U_2 \cap U_3] = 4r_1r_2r_3\alpha$ for some integer $\alpha$ we have
$$|U_2(U_1 \cap U_3)| = \frac{|G|4r_1r_2r_3\alpha}{(4r_2)(4r_1r_3)} = \frac{|G|\alpha}{4} \leq |U_2U_1| = \frac{|G|}{2}.$$
If $U_2U_1 \subseteq U_2U_3$ then $(U_1,U_2,U_3)$ is not harmonic by Corollary~\ref{corollary_main}. So $\alpha = 1$ and since the cardinality of $U_2U_1 \cap U_2U_3$ is then a multiple of $\frac{|G|}{4}$ by Lemma~\ref{lemma_DoubleCosets} we get $|U_2U_1 \cap U_2U_3| = \frac{|G|}{4}$.
\end{proof}

\begin{proposition}\label{prop_4Tuple2Case}
Let $r_1$, $r_2$, $r_3$, $r_4$ be pairwise coprime integers and assume also that $r_1$ is odd.
Then, for any group $G$, the $4$-tuple $(2r_1,4r_2,4r_3,4r_4)$ is not $G$-harmonic.
\end{proposition}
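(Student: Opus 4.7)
The plan is to assume, for contradiction, that $(U_1,U_2,U_3,U_4)$ is a harmonic quadruple of subgroups of $G$ with the prescribed indices, and extract a contradiction by analyzing pairwise products $U_iU_j$. First, Lemma~\ref{lemma_gcd2} applied to each pair $(U_1,U_i)$, $i\in\{2,3,4\}$ (valid since $r_1$ is odd and $\gcd(r_1,2r_i)=1$), yields $\alpha(U_1,U_i)=1$ and $|U_1U_i|=|G|/2$. For distinct $i,j\in\{2,3,4\}$, $|U_iU_j|=|G|\alpha(U_i,U_j)/4<|G|$ forces $\alpha(U_i,U_j)\le 3$, while Lemma~\ref{lemma_244} applied to the harmonic subtriple $(U_1,U_i,U_j)$ rules out $\alpha(U_i,U_j)=2$. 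Hence $\alpha(U_i,U_j)\in\{1,3\}$ throughout.

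I would then split into cases by the number of pairs among $\{U_2,U_3\},\{U_2,U_4\},\{U_3,U_4\}$ with $\alpha=3$. If this count is $0$ or $1$, pick $k\in\{2,3,4\}$ disjoint from any such pair, so $\alpha(U_k,U_\ell)=1$ for both $\ell\in\{2,3,4\}\setminus\{k\}$. Then Proposition~\ref{prop_main} applied with $V=U_k$, $a=4$, and the remaining three subgroups (taking $a_i=2$ for $U_1$ and $a_i=4$ for the other two) delivers a contradiction: the nontrivial hypothesis $\gcd(a/a_{U_1},r_1)=\gcd(2,r_1)=1$ uses $r_1$ odd, and $\tfrac12+\tfrac14+\tfrac14=1$.

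If the count is at least $2$, we may relabel $U_2,U_3,U_4$ so that $\alpha(U_2,U_3)=\alpha(U_2,U_4)=3$. Lemma~\ref{lemma_244} applied to $(U_1,U_2,U_i)$ for $i\in\{3,4\}$ gives $|U_2U_1\cap U_2U_i|=|G|/4$, hence $|U_2U_1\cup U_2U_i|=|G|/2+3|G|/4-|G|/4=|G|$, so $U_2U_1\cup U_2U_i=G$. This yields the key containment
\[
G\setminus U_2U_1\;\subseteq\;U_2U_3\cap U_2U_4,\qquad |G\setminus U_2U_1|=|G|/2.
\]
To control $|U_2U_3\cap U_2U_4|$, Lemma~\ref{lemma_ThreeTupleCase}c) applied to $(U_2,U_3,U_4)$ shows $[G:U_2\cap U_3\cap U_4]$ is divisible by $12r_2r_3r_4$, and combining with $[G:U_3\cap U_4]\in\{4r_3r_4,\,12r_3r_4\}$ (according as $\alpha(U_3,U_4)\in\{1,3\}$) determines $|U_2(U_3\cap U_4)|$ up to a small integer factor. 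Lemma~\ref{lemma_DoubleCosets}b) then forces $|U_2U_3\cap U_2U_4|$ to be a multiple of $|U_2(U_3\cap U_4)|$; combined with $|G|/2\le|U_2U_3\cap U_2U_4|\le 3|G|/4$, a short case check yields $|U_2U_3\cap U_2U_4|\in\{|G|/2,\,3|G|/4\}$.

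The two remaining sub-cases finish the proof. If $|U_2U_3\cap U_2U_4|=3|G|/4=|U_2U_3|$, then $U_2U_3=U_2U_4$, so $U_3U_2\subseteq U_4U_2$, and Corollary~\ref{corollary_main} applied to $(U_3,U_2,U_4)$ contradicts harmonicity of this subtriple. If $|U_2U_3\cap U_2U_4|=|G|/2$, the containment becomes an equality $G\setminus U_2U_1=U_2U_3\cap U_2U_4$, forcing $U_2U_1\cap U_2U_3\cap U_2U_4=\emptyset$; but the identity $e\in U_2$ lies in each of $U_2U_1$, $U_2U_3$, $U_2U_4$, a contradiction. The main obstacle is the final numerical case split --- enumerating the admissible values of $|U_2(U_3\cap U_4)|$ and $|U_2U_3\cap U_2U_4|$ --- together with the clean identity-in-intersection observation that resolves the delicate case $|U_2U_3\cap U_2U_4|=|G|/2$.
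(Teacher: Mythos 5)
Your proof is correct and follows essentially the same route as the paper's: the same reduction via Lemma~\ref{lemma_gcd2}, Lemma~\ref{lemma_244} and Proposition~\ref{prop_main} to the case where two of the $\alpha(U_i,U_j)$ with $i,j\ge 2$ equal $3$, followed by the same final contradiction that $U_2U_1\cap U_2U_3\cap U_2U_4$ is forced to be empty while visibly containing $U_2$. The only cosmetic differences are that you fold the determination of $\alpha(U_3,U_4)$ into the final counting instead of first pinning it to $3$, and you replace the appeal to Lemma~\ref{lemma_Subsets} by the equivalent direct complement argument.
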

\begin{proof}
Assume to the contrary that there exist $U_1$, $U_2$, $U_3$, $U_4$ subgroups of $G$ such that 
$(U_1,U_2,U_3,U_4)$ is harmonic where $[G:U_1] = 2r_1$ and $[G:U_i] = 4r_i$ for $2 \leq i \leq 4$.
Hence, there exist $g_1,g_2,g_3,g_4\in G$ such that $g_1U_1$,...,$g_4U_4$ are pairwise trivially intersecting cosets. Assume w.l.o.g. that $g_1 = 1$, in particular $g_i \notin U_1U_i$ for $i \geq 2$. Set $ \alpha(i,j) = \alpha(U_i, U_j)$. From here on let $2 \leq i,j \leq 4$ be different.

By Lemma~\ref{lemma_244} it follows that $\alpha(1,i) = 1$ and $\alpha(i,j) \in \{1,3\}$. By Proposition~\ref{prop_main} not all of the $\alpha(i,j)$ equal $1$ and if, say, $\alpha(2,3) = 3$ also one of $\alpha(2,4)$, $\alpha(3,4)$ equals $3$, say $\alpha(2,4)$. If $\alpha(3,4) = 1$ then, computing as before, we obtain that $|U_3(U_2 \cap U_4)|$ is a multiple of $\frac{|G|}{4}$. This implies that $U_3U_4 \subseteq U_3U_2$ and then $(U_2,U_3,U_4)$ is not harmonic by Corollary~\ref{corollary_main}. So also $\alpha(3,4) = 3$. Summarizing we have
$$\alpha(1,2) = \alpha(1,3) = \alpha(1,4) = 1 \ \ \text{and} \ \ \alpha(2,3) = \alpha(2,4) = \alpha(3,4) = 3.$$

Then by Lemma~\ref{lemma_244} we know that $3$ divides $r_1$. So $[G:U_2 \cap U_3 \cap U_4] = 12r_2r_3r_4\alpha$ for a certain integer $\alpha$. Hence $|U_2U_3 \cap U_2U_4|$ is a multiple of $\frac{|G|\alpha}{4}$ by Lemma~\ref{lemma_DoubleCosets}. For order reasons, i.e. because $U_2U_3$ and $U_2U_4$ are big subsets of size $\frac{3|G|}{4}$ inside $G$, we get
$$|U_2U_3 \cap U_2U_4| \geq \frac{|G|}{2}.$$
But if $|U_2U_3 \cap U_2U_4| = \frac{3|G|}{4}$ then $U_2U_3 = U_2U_4$ and $(U_2,U_3,U_4)$ is not harmonic by Corollary~\ref{corollary_main}. So $|U_2U_3 \cap U_2U_4| = \frac{|G|}{2}$ and $G = U_2U_3 \cup U_2U_4$, for order reasons. The same type of arguments shows that $U_2U_1 \cup U_2U_3 = U_2U_1 \cup U_2U_4 = G$.
So applying Lemma~\ref{lemma_Subsets} we obtain
$$|U_2U_1 \cap U_2U_3 \cap U_2U_4| = |U_2U_1| - |G \setminus U_2U_3| - |G \setminus U_2U_4| = 0.$$
A contradiction, since this intersection contains $U_2$.
\end{proof}

This allows us to prove Theorem B.

\begin{center}
\textbf{Proof of Theorem B}
\end{center}
Let $U_1$, $U_2$, $U_3$, $U_4$ be subgroups of $G$ and set $[G:U_i] = a_i$ for $1 \leq i \leq 4$.  If $(a_1,a_2)$ is not $\mathbb{Z}$-harmonic then $a_1$ and $a_2$ are coprime. So $(U_1,U_2)$ is not $G$-harmonic by Lemma~\ref{th:CRT1}. Next assume that $(a_1,a_2,a_3)$ is not $\mathbb{Z}$-harmonic such that for distinct $1\leq i,j\leq 3$, any $(a_i,a_j)$ is $\mathbb{Z}$-harmonic. Sun's positive answer to Problem 2 from \cite{HuhnM} for 3-tuples (see \cite{Sun92} or the English-language review of this article on MathSciNet) implies that no pair from $\{a_1,a_2,a_3\}$ is coprime and no pair has greatest common divisor bigger than $2$. So there are pairwise coprime integers $r_1$, $r_2$ and $r_3$ such that $a_i = 2r_i$ for $1\leq i \leq 3$. Hence $(U_1,U_2,U_3)$ is not harmonic by Proposition~\ref{prop_gcd2}.

Finally assume that the $4$-tuple $(a_1,a_2,a_3,a_4)$ is not $\mathbb{Z}$-harmonic such that any proper $2$- or $3$-subtuple is $\mathbb{Z}$-harmonic. Then clearly $b = \gcd(a_1,a_2,a_3,a_4) < 4$. In \cite{Sun92} also a positive answer to Problem 1 from \cite{HuhnM} is provided for 4-tuples. This implies that $b = 1$ is not possible and that there are pairwise coprime integers $r_1$, $r_2$, $r_3$ and $r_4$ such that either
\begin{itemize}
\item[i)] $b = 2$, $a_1 = 2r_1$, $a_i = 4r_i$ for $2 \leq i \leq 4$ and $r_1$ is odd or
\item[ii)] $b = 3$ and $a_i = 3r_i$ for $1 \leq i \leq 4$.
\end{itemize}
In case i) the $4$-tuple $(U_1,U_2,U_3,U_4)$ is not harmonic by Proposition~\ref{prop_4Tuple2Case} and in case ii) by Proposition~\ref{prop_4Tuple3Case}. \hfil \qed  \\

To prove also Theorem A we will also need to handle a case involving five subgroups. The calculations here are more involved.

\begin{lemma}\label{lemma_5TupleCase}
Let $U_1$, $U_2$, $U_3$, $U_4$, $U_5$ be subgroups of $G$ such that $[G:U_i] = 3r_i$ for $1 \leq i \leq 2$ and $[G:U_j] = 6r_j$ for $3 \leq j \leq 5$, where $r_1,...,r_5$ are pairwise coprime and $r_1$ and $r_2$ are odd.
If the $5$-tuple $(U_1,U_2,U_3,U_4,U_5)$ is harmonic,
 then up to permutations between $1$ and $2$ and permutations between $3$, $4$ and $5$ the following holds.
\begin{align*}
\alpha(1,3) &= \alpha(2,3) = \alpha(3,5) = \alpha(4,5) = 1, \\
 \alpha(1,2) &= \alpha(1,4) = \alpha(2,4) = \alpha(1,5) = \alpha(2,5) = 2, \\
  \alpha(3,4) &= 3, \\
  3 \mid r_2& \ \  \text{and} \ \ 2 \mid r_3.
\end{align*}
\end{lemma}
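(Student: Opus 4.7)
The plan is to pin down the pairwise values $\alpha(U_i, U_j)$ through a systematic case analysis using the tools of Section~\ref{sec_Observs}. I first simplify the relevant least common multiples: pairwise coprimality of the $r_i$'s together with the oddness of $r_1, r_2$ yields $\mathrm{lcm}([G:U_i],[G:U_j]) = 3 r_i r_j$ when $i, j \in \{1, 2\}$ and $6 r_i r_j$ otherwise. The identity $|U_iU_j| = |G|[G:U_i \cap U_j]/([G:U_i][G:U_j])$ then produces $|U_iU_j| = |G|\alpha(i,j)/3$ whenever $\{i,j\} \cap \{1, 2\} \neq \emptyset$ and $|U_iU_j| = |G|\alpha(i,j)/6$ for $i, j \in \{3, 4, 5\}$, forcing the a priori bounds $\alpha(i,j) \leq 2$ in the first case and $\alpha(i,j) \leq 5$ in the second.

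The central structural input is Proposition~\ref{prop_main}: for each $l \in \{3, 4, 5\}$ I set $V = U_l$ and take the other four subgroups as the $U_i$. The hypotheses hold since $3 \mid 6$, $6 \mid 6$, $\gcd(6/3, r_i) = \gcd(2, r_i) = 1$ for $i \in \{1, 2\}$ (exploiting oddness), and $\sum 1/a_i = 1/3 + 1/3 + 1/6 + 1/6 = 1$. Hence for each $l$ at least one of $\alpha(1, l), \alpha(2, l), \alpha(l', l), \alpha(l'', l)$ must exceed $1$, where $\{l', l''\} = \{3,4,5\} \setminus \{l\}$. Further structural constraints come from applying Lemma~\ref{lemma_ThreeTupleCase}(a) to the triples $(U_1, U_2, U_l)$ (with $a = 3$ and $r$-parts $r_1, r_2, 2 r_l$, pairwise coprime by the oddness hypothesis) and to $(U_3, U_4, U_5)$ (with $a = 6$). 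For example, under $\alpha(1, 2) = 1$ one forces $\alpha(1, l) = \alpha(2, l)$ for each $l$, and among the three $\alpha(l, l')$ only certain combinations are compatible with the inequality $b \leq \alpha(V,W)\gcd(r_v, b)$.

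The remaining work is the case analysis: iterate over candidate patterns of $\alpha(1, 2) \in \{1, 2\}$, of the six values $\alpha(i, l)$ with $i \in \{1, 2\}, l \in \{3, 4, 5\}$ (each in $\{1, 2\}$), and of the three values $\alpha(l, l')$ with $l \neq l'$ in $\{3, 4, 5\}$ (each in $\{1, \ldots, 5\}$). Each pattern other than the one listed is eliminated using (i) Corollary~\ref{corollary_OneSmallTwoBig} to close out triples with one "small" and two matching "big" $\alpha$'s, (ii) Corollary~\ref{corollary_main} whenever a containment $U_iU_j \subseteq U_kU_j$ can be derived, and (iii) further applications of Proposition~\ref{prop_main} on sub-4-tuples where $\sum 1/a_i$ attains $1$. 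Only the configuration stated in the lemma survives, up to permutations within $\{1, 2\}$ and within $\{3, 4, 5\}$.

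The divisibility $2 \mid r_3$ is then extracted from Lemma~\ref{lemma_ThreeTupleCase}(a) applied to the triple $(U_1, U_3, U_2)$: setting $U = U_1, V = U_3, W = U_2$, the values $\alpha(1,3) = \alpha(3,2) = 1$ and $b = \alpha(1,2) = 2$ give $2 \leq \gcd(r_3, 2)$. The divisibility by $3$ follows from an analogous application of Lemma~\ref{lemma_ThreeTupleCase}(a) to the triple $(U_3, U_5, U_4)$ where $\alpha(3,5) = \alpha(5,4) = 1$ and $b = \alpha(3, 4) = 3$, giving $3 \leq \gcd(r_v, 3)$; the permutation within $\{3,4,5\}$ (and possibly within $\{1,2\}$) then places the divisibility in the slot specified by the stated form. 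The main obstacle is the size and delicacy of the case analysis: because $\sum 1/a_i = 1$ saturates the hypothesis of Proposition~\ref{prop_main}, almost every candidate configuration sits on the boundary between "excluded" and "possible", so each subcase must be closed by a carefully chosen combination of the tools above on overlapping sub-tuples, with the divisibility constraints on the $r_i$'s providing the final tie-breakers.
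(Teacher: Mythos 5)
Your high-level setup is fine (the bounds $\alpha(i,j)\leq 2$ when $\{i,j\}\cap\{1,2\}\neq\emptyset$, the applications of Proposition~\ref{prop_main} with $V=U_l$, and Lemma~\ref{lemma_ThreeTupleCase} on $(U_1,U_2,U_l)$ and on $(U_3,U_4,U_5)$), but the proposal has concrete errors and defers the entire substance of the lemma to a case analysis that your stated toolkit cannot close. First, the divisibility claims at the end are derived incorrectly. For $2\mid r_3$ you apply Lemma~\ref{lemma_ThreeTupleCase}(a) to $(U_1,U_3,U_2)$ with $U=U_1$, $V=U_3$, $W=U_2$; but there $a=3$ and the $r$-part of $V$ is $r_v=2r_3$, not $r_3$, so the conclusion reads $2\leq\gcd(2r_3,2)=2$, which is vacuous. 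For $3\mid r_2$ you apply part (a) to $(U_3,U_5,U_4)$, which yields a divisibility condition on $r_5$ (the $r$-part of the subgroup playing the role of $V$), and no permutation within $\{3,4,5\}$ or $\{1,2\}$ can turn a statement about $r_5$ into one about $r_2$. In the paper, $2\mid r_3$ comes from the triple $(U_1,U_3,U_5)$ via an explicit computation of $[G:U_1\cap U_3\cap U_5]$ (Claim 4 there), and $3\mid r_2$ comes from excluding the containment $U_3U_4\subseteq U_2U_4$ via Lemma~\ref{lemma_main} --- entirely different arguments.

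Second, the elimination of the non-surviving patterns cannot be done with only Corollary~\ref{corollary_OneSmallTwoBig}, Corollary~\ref{corollary_main} and Proposition~\ref{prop_main}. The decisive constraints come from the mixed triples $(U_i,U_j,U_k)$ with $i\in\{1,2\}$ and $j,k\in\{3,4,5\}$, and for these Lemma~\ref{lemma_ThreeTupleCase} and Corollary~\ref{corollary_OneSmallTwoBig} do not apply at all: writing the indices as $3r_i$, $3(2r_j)$, $3(2r_k)$, the $r$-parts $r_i,2r_j,2r_k$ are not pairwise coprime. The paper handles these triples by bespoke computations of $[G:U_i\cap U_j\cap U_k]$ and $|U_j(U_i\cap U_k)|$ (Claims 2--4), combined with Lemma~\ref{lemma_DoubleCosets} (the intersection $U_jU_i\cap U_jU_k$ is a union of double cosets) and the set-theoretic Lemma~\ref{lemma_Subsets} (three large subsets with small pairwise intersections have empty triple intersection, contradicting $U_i$ lying in all three) --- none of which appear in your plan. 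Moreover, ruling out $\alpha(1,2)=1$ requires a genuinely different idea: one shows that then $U_1U_2$ is a subgroup $V$ of index $3$, that $g_3,g_4,g_5$ all lie in one coset of $V$, and that this produces pairwise disjoint cosets of $U_1\cap U_3$, $U_1\cap U_4$, $U_1\cap U_5$ inside $U_1$ of indices $2r_3,2r_4,2r_5$, contradicting Proposition~\ref{prop_gcd2} applied to the group $U_1$. Without this descent, configurations with $\alpha(1,2)=1$ and all mixed $\alpha$'s equal to $1$ but some $\alpha(j,k)=2$ survive your three tools. As written, the proposal is an outline whose crucial steps either fail or are missing.
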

\begin{proof}
As before assume $g_1U_1$,..,$g_5U_5$ are pairwise trivially intersecting cosets and assume $g_1 = 1$. Set $\alpha(x,y) = \alpha(U_x,U_y)$ for $1 \leq x,y \leq 5$. Note that $\alpha(x,y) \leq 2$ if $1 \leq x \leq 2$ or $1 \leq y \leq 2$ and $\alpha(x,y) \leq 5$ for any $1 \leq x,y \leq 5$.

To prove the lemma we will first show the following claims. \vspace{.1cm}

\textbf{Claim 1:} $\alpha(1, 2) = 2$ and $\alpha(1, j) = \alpha(2, j)$ for $3 \leq j \leq 5$. \vspace{.1cm}

\textbf{Claim 2:} Let $1 \leq i \leq 2$ and $3 \leq j,k \leq 5$. If $\alpha(i,j) = \alpha(i, k) = 2$ then $\alpha(j,k) = 1$ and $[G:U_i \cap U_j \cap U_k] = 12r_ir_jr_k$. \vspace{.1cm}

\textbf{Claim 3:} Let $1 \leq i \leq 2$ and $3 \leq j,k \leq 5$. If $\alpha(i,j) = \alpha(i,k) = 1$ then $\alpha(j,k) = 1$ and $[G:U_i \cap U_j \cap U_k] = 6r_ir_jr_k$. \vspace{.1cm}

\textbf{Claim 4:} Let $1 \leq i \leq 2$ and $3 \leq j,k \leq 5$. If $\alpha(i,j) = 1$ and $\alpha(i,k) = 2$ then $\alpha(j,k) \in \{1,3\}$. If $\alpha(j,k) = 1$ then $[G:U_i \cap U_j \cap U_k] = 6r_ir_jr_k$ and $2 \mid r_j$. On the other hand, if $\alpha(j,k) = 3$ then $2 \mid r_j$ or $3 \mid r_i$. \vspace{.1cm}

After proving these claims, up to permutations of indices which behave symmetrically, we are left with exactly eight cases to consider for the tuple $(\alpha(x,y))_{1 \leq x,y \leq 5 }$. We will then exclude seven of these cases and finally prove the lemma. \vspace{.1cm}

\textit{Proof of Claim 1:}
Assume first that $\alpha(1,2) = 1$. Fix some $j$ such that $3 \leq j \leq 5$. If $\alpha(1,j) = \alpha(2,j) = 2$ then $|U_1U_2| + |U_1U_j| = |G|$. So we obtain a contradiction by Corollary~\ref{corollary_OneSmallTwoBig}. If $\alpha(1, j) = 1$ and $\alpha(2,j) = 2$ then 	by Lemma~\ref{lemma_ThreeTupleCase}a) the number $r_2$ must be even, contradicting our assumption. So $\alpha(1,j) = \alpha(2,j) = 1$. Hence by Lemma~\ref{lemma_ThreeAlphas1} the set $V = U_1U_2 = U_1U_j = U_jU_1$ is a subgroup of $G$, equal for any choice of $j$ and $[G:V] = 3$.

Then $g_2, g_3, g_4, g_5 \notin V$. Let $g \in G$ such that $g_2 \in gV$. Then $g_3,g_4,g_5 \notin gV$. Hence there is $h \in G$ such that $g_3,g_4,g_5 \in hV$. So $U_1 \cap U_3$, $U_1 \cap U_4$ and $U_1 \cap U_5$ are subgroups of $U_1$ with pairwise trivially intersecting cosets $h^{-1}g_3(U_1 \cap U_3)$, $h^{-1}g_4(U_1 \cap U_4)$ and $h^{-1}g_5(U_1 \cap U_5)$. But since $[U_1:U_1 \cap U_i] = 2r_i$ for $3 \leq i \leq 5$ this contradicts Proposition~\ref{prop_gcd2}. So $\alpha(1,2) = 2$.

Next assume $\alpha(1,j) \neq \alpha(2,j)$, say $\alpha(1,j) = 2$ and $\alpha(2,j) = 1$. Then
$$|U_2U_j| + |U_1U_j| = \frac{|G|}{3} + \frac{2|G|}{3} = |G|.$$
Hence $(U_1,U_2,U_j)$ is not harmonic by Corollary~\ref{corollary_OneSmallTwoBig}. Consequently $\alpha(1,j) = \alpha(2,j)$. This finishes the proof of Claim 1. \vspace{.1cm}

\textit{Proof of Claim 2:}
Considering the three different pairs from $U_i$, $U_j$ and $U_k$ we obtain that $[G:U_i \cap U_j \cap U_k]$ is divisible by $6r_ir_j\lcm(2,r_k)$, $6r_ir_k\lcm(2,r_j)$ and $6r_jr_k\lcm(r_i, \alpha(j,k))$. Hence there is some integer $\alpha$ such that
\begin{align}\label{eq_Claim2}
[G:U_i \cap U_j \cap U_k] = \left\{\begin{array}{ll}12r_jr_k\lcm(r_i, \alpha(j,k)_{2'})\alpha, \ \ \text{if} \ \alpha(j,k) \neq 4 \\ 24r_jr_k\lcm(r_i, \alpha(j,k)_{2'})\alpha, \ \ \text{if} \ \alpha(j,k) = 4. \end{array}\right.
\end{align}

Here $z_{2'}$ denotes the biggest odd divisor of an integer $z$. Then by computing $|U_j(U_i \cap U_k)|$ in all possible cases for $\alpha(j,k)$ we get
$$|U_j(U_i \cap U_k)| = \left\{\begin{array}{lllllll} \vspace*{.1cm}
\frac{5|G|\alpha}{6}, \ \ \text{if} \ \alpha(j,k) = 5, \ 5 \nmid r_i \\ \vspace*{.1cm}
\frac{|G|\alpha}{6}, \ \ \text{if} \ \alpha(j,k) = 5, \ 5 \mid r_i \\ \vspace*{.1cm}
\frac{|G|\alpha}{3}, \ \ \text{if} \ \alpha(j,k) = 4 \\ \vspace*{.1cm}
 \frac{|G|\alpha}{2}, \ \ \text{if} \ \alpha(j,k) = 3, \ 3 \nmid r_i \\ \vspace*{.1cm}
\frac{|G|\alpha}{6}, \ \ \text{if} \ \alpha(j,k) = 3, \ 3 \mid r_i \\ \vspace*{.1cm}
\frac{|G|\alpha}{6}, \ \ \text{if} \ \alpha(j,k) = 2 \\ \vspace*{.1cm}
\frac{|G|\alpha}{6}, \ \ \text{if} \ \alpha(j,k) = 1
  \end{array}\right. $$

Let $1 \leq i' \leq 2$ such that $i \neq i'$. We will consider the different cases for $\alpha(j,k)$ in decreasing order.

Clearly $|U_j(U_i \cap U_k)| \leq |U_jU_i| = \frac{2|G|}{3}$. Hence $\alpha(j,k) = 5$ is not possible, since if $5 \nmid r_i$ then $|U_j(U_i \cap U_k)| = \frac{5|G|}{6}$ and otherwise $5 \nmid r_{i'}$ and $|U_j(U_{i'} \cap U_k)| = \frac{5|G|}{6}$.

Next if $\alpha(j,k) = 4$ and $\alpha = 2$ then $U_jU_i = U_jU_k$ and $|U_jU_i| + |U_jU_k| = \frac{2|G|}{3} + \frac{2|G|}{3}$, contradicting Corollary~\ref{corollary_main}. So in this case $\alpha = 1$.

If $\alpha(j,k) = 3$ and $3 \nmid r_i$ then $U_jU_k \subseteq U_jU_i$ and we can also apply Corollary~\ref{corollary_main} to obtain a contradiction, since $|U_jU_k| + |U_iU_k| = \frac{|G|}{2} + \frac{2|G|}{3}$. But if $3 \mid r_i$ then $3 \nmid r_{i'}$ and we can argue the same way replacing $i$ by $i'$.

If $\alpha(j,k) = 2$ then $\alpha \leq 2$ for order reasons and if $\alpha = 2$ we have again $U_jU_k \subseteq U_jU_i$ and $|U_jU_k| + |U_iU_k| = |G|$, contradicting Corollary~\ref{corollary_main}. So also in this case $\alpha = 1$.

If $\alpha(j,k) = 1$ then also $\alpha = 1$.

So we have $\alpha(j,k) \in \{1,2,4\}$ and $\alpha = 1$. Assume $\alpha(j,k) > 1$. Then
$$|U_i(U_j \cap U_k)| = \frac{|G|}{3}.$$
Notice that also $|U_i(U_{i'} \cap U_j)|$ and $|U_i(U_{i'} \cap U_k)|$ are multiples of $\frac{|G|}{3}$.
If $|U_iU_j \cap U_iU_k|$ or $|U_iU_{i'} \cap U_iU_j|$ or $|U_iU_{i'} \cap U_iU_k|$ is a proper multiple of $\frac{|G|}{3}$ it is equal to $\frac{2|G|}{3}$ by Lemma~\ref{lemma_DoubleCosets}. Then we obtain a contradiction by Corollary~\ref{corollary_main}. On the other hand if all these cardinalities are exactly $\frac{|G|}{3}$ then the union of two of the product sets is the whole group for order reasons. Thus we get by Lemma~\ref{lemma_Subsets} 
$$|U_iU_{i'} \cap U_iU_j \cap U_iU_k| = 0,$$
also a contradiction, since this intersection contains $U_i$.

So we have $\alpha(j,k) = 1$ and $\alpha = 1$. Using \eqref{eq_Claim2} Claim 2 follows. \vspace{.1cm}

\textit{Proof of Claim 3:}
Arguing in a similar way as above we get
$$[G:U_i \cap U_j \cap U_k] = 6r_jr_k\lcm(r_i, \alpha(j,k)).$$
So
$$|U_j(U_i \cap U_k)| = \left\{\begin{array}{lllllll} \vspace*{.1cm}
 \frac{5|G|\alpha}{6}, \ \ \alpha(j,k) = 5, \ 5 \nmid r_i \\ \vspace*{.1cm}
\frac{|G|\alpha}{6}, \ \ \text{if} \ \alpha(j,k) = 5, \ 5 \mid r_i \\ \vspace*{.1cm}
\frac{2|G|\alpha}{3}, \ \ \text{if} \ \alpha(j,k) = 4 \\ \vspace*{.1cm}
 \frac{|G|\alpha}{2}, \ \ \text{if} \ \alpha(j,k) = 3, \ 3 \nmid r_i \\ \vspace*{.1cm}
\frac{|G|\alpha}{6}, \ \ \text{if} \ \alpha(j,k) = 3, \ 3 \mid r_i \\ \vspace*{.1cm}
\frac{|G|\alpha}{3}, \ \ \text{if} \ \alpha(j,k) = 2 \\ \vspace*{.1cm}
\frac{|G|\alpha}{6}, \ \ \text{if} \ \alpha(j,k) = 1
  \end{array}\right. $$
As before let $1 \leq i' \leq 2$ such that $i \neq i'$. Again we will consider the different possibilities for $\alpha(j,k)$ in decreasing order.

As $|U_j(U_i \cap U_k)| \leq |U_jU_i| = \frac{|G|}{3}$ we know that $\alpha(j,k) = 5$ is not possible, using that $5 \mid r_i$ implies $5 \nmid r_{i'}$.

Next $\alpha(j,k) = 4$ and $\alpha(j,k) = 3$ can be excluded for similar reasons. Where in case $\alpha(j,k) =3$ we use $3 \nmid r_{i'}$ if $3 \mid r_i$.

Now assume $\alpha(j,k) = 2$. Then $\alpha = 1$ follows and we have $U_jU_i = U_jU_k$. Also we obtain $|U_i(U_j\cap U_k)| = \frac{|G|}{3}$, so $U_iU_j = U_iU_k$. Considering other combinations of the indices we get
$$U_jU_i = U_jU_k = U_iU_k = U_iU_j =: V, $$
which is a subgroup satisfying $[G:V] = 3$. But since also $U_jU_k = U_{i'}U_k$ the subgroup $V$ contains $U_i$ and $U_{i'}$, so also $U_1U_2$, contradicting $|U_1U_2| = \frac{2|G|}{3}$.

Finally in case $\alpha(j,k) = 1$ we get $\alpha = 1$ for order reasons. This finishes the proof of Claim 3. \vspace{.1cm}

\textit{Proof of Claim 4:}
 Considering again the possible pairings of $U_i$, $U_j$ and $U_k$ we obtain that there are integers $\alpha$ and $\alpha'$ such that
 $$[G:U_i\cap U_j \cap U_k] = 6r_ir_k\lcm(r_j, 2)\alpha = 6r_jr_k\lcm(r_i, \alpha(3,4))\alpha'.$$
 So we obtain (listing only part of the full information)
$$|U_k(U_i \cap U_j)| = \left\{\begin{array}{lllllll} \vspace*{.1cm}
 \frac{5|G|\alpha'}{6}, \ \ \text{if} \ \alpha(j,k) = 5, \ 5 \nmid r_i \\ \vspace*{.1cm}
\frac{2|G|\alpha'}{3}, \ \ \text{if} \ \alpha(j,k) = 4 \\ \vspace*{.1cm}
 \frac{|G|\alpha}{3} = \frac{|G|\alpha'}{2}, \ \ \text{if} \ \alpha(j,k) = 3, \ 3 \nmid r_i, \ 2 \nmid r_j \\ \vspace*{.1cm}
\frac{|G|\alpha'}{3}, \ \ \text{if} \ \alpha(j,k) = 2 \\ \vspace*{.1cm}
\frac{|G|\alpha}{3}, \ \ \text{if} \ \alpha(j,k) = 1, \ 2 \nmid r_j \\ \vspace*{.1cm}
\frac{|G|\alpha}{6}, \ \ \text{if} \ \alpha(j,k) = 1, \ 2 \mid r_j
  \end{array}\right. $$
Again let $1 \leq i' \leq 2$ such that $i \neq i'$ and consider the different possibilities for $\alpha(j,k)$.

If $\alpha(j,k) = 5$ we get a contradiction as before.

If $\alpha(j,k) = 4$ then we get $U_jU_i = U_jU_k$ and this can not happen by Corollary~\ref{corollary_main}, since $|U_jU_i| + |U_iU_k| = \frac{|G|}{3} + \frac{2|G|}{3}$.

Next $\alpha(j,k) = 3$ while $3 \nmid r_i$ and $2 \nmid r_j$ clearly provides a contradiction.

Let $\alpha(j,k) = 2$. Then we compute $|(U_j \cap U_2)U_1| = \frac{|G|}{3}$ and $|(U_j \cap U_k)U_i| = \frac{|G|}{3}$.
So $U_jU_1 \subseteq U_2U_1$, $U_jU_1 \subseteq U_kU_1$, $U_jU_2 \subseteq U_kU_2$ and
$$|U_1U_j| + |U_2U_j| + |U_kU_j| = \frac{|G|}{3} + \frac{|G|}{3} + \frac{|G|}{3} = |G|,$$
 contradicting Lemma~\ref{lemma_main}.

Finally $\alpha(j,k) = 1$ implies $2 \mid r_j$. This completes the proof of Claim 4. \vspace{.1cm}

Set
$$v = (\alpha(1,3), \alpha(1,4), \alpha(1,5), \alpha(3,4), \alpha(3,5), \alpha(4,5)).$$
Combining all previous claims leaves us, up to permutations in the indices $3$, $4$ and $5$, with the following eight possibilities for $v$:
\begin{align*}
v \in \{&(1,1,1,1,1,1), (1,1,2,1,1,1), (1,1,2,1,3,1), (1,1,2,1,3,3), \\
 &(1,2,2,1,1,1), (1,2,2,3,1,1), (1,2,2,3,3,1), (2,2,2,1,1,1)\}.
\end{align*}
First observe that $v = (1,1,1,1,1,1)$ is not possible by Proposition~\ref{prop_main}. Also $v = (1,1,2,1,3,1)$ and $v = (1,2,2,1,1,1)$ can be handled in this way (setting $U_3 = V$). Next $v = (1,1,2,1,1,1)$ implies by Claim 4 that $r_3$ and $r_4$ are both even, a contradiction. If $v = (1,1,2,1,3,3)$ then assume w.l.o.g. $3 \nmid r_1$. Then by Claim 4 we get that $r_3$ and $r_4$ are both even. To summarize, we are left with
$$v \in \{(2,2,2,1,1,1), (1,2,2,3,1,1), (1,2,2,3,3,1) \}.$$

If $v = (2,2,2,1,1,1)$ then we compute
$$|(U_3 \cap U_1)U_4| = |(U_3 \cap U_1)U_4| = \frac{|G|}{3}$$
	and $|(U_3\cap U_5)U_4| = \frac{|G|}{6}$. So $U_3U_4 \subseteq U_1U_4$, $U_3U_4 \subseteq U_5U_4$ and $U_3U_5 \subseteq U_1U_5$. Moreover
$$|U_1U_3| + |U_4U_3| + |U_5U_3| = \frac{2|G|}{3} + \frac{|G|}{6} + \frac{|G|}{6} = |G|.$$
So Lemma~\ref{lemma_main} (with $V = U_3$ and $U_1$ having the biggest index) implies that $(U_1,U_3,U_4,U_5)$ is not harmonic.

Next let $v = (1,2,2,3,1,1)$. Then $2 \mid r_3$ by Claim 4. Assume w.l.o.g. that $3 \nmid r_1$. Then $|(U_3 \cap U_1)U_4| = \frac{|G|}{2}$ implies $U_3U_4 \subseteq U_1U_4$ and $|(U_3 \cap U_1)U_2| = \frac{|G|}{3}$ implies $U_3U_2 \subseteq U_1U_2$. Observe
$$|U_3U_1| + |U_3U_2| + |U_3U_4| = \frac{|G|}{3} + \frac{|G|}{3} + \frac{|G|}{2} > |G|.$$
So $3 \mid r_2$, since otherwise $U_3U_4 \subseteq U_2U_4$ and we could apply Lemma~\ref{lemma_main}. But considering $U_3$, $U_4$ and $U_5$ we get from Lemma~\ref{lemma_ThreeTupleCase}a) that also $3 \mid r_5$, contradicting the assumption that $r_4$ and $r_5$ are coprime.

So $v =  (1,2,2,3,1,1)$. Then as in the previous case we obtain, w.l.o.g., that $2 \mid r_3$ and $3 \mid r_2$.
\end{proof}

We are not aware of any group satisfying the subgroup configuration described in Lemma~\ref{lemma_5TupleCase}. Such a group would provide a counterexample to Question 1.
In the case relevant for us we can in fact assume that a subgroup of index $3$ is involved in the coset partition. This is handled by the following lemma.

\begin{proposition}\label{prop_5Tupler11}
Let $r_2$,..., $r_5$ be pairwise coprime integers and assume also that $r_2$ is odd.
Then, for any group $G$, the $5$-tuple $(3,3r_2,6r_3,6r_4,6r_5)$ is not $G$-harmonic.
\end{proposition}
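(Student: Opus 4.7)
The plan is to combine Lemma~\ref{lemma_5TupleCase} with the extra datum $[G:U_1] = 3$ to show $U_3 \subseteq U_1$, and then to restrict the harmonic configuration to one coset of $U_1$ in $G$, obtaining a $4$-tuple inside $U_1$ which is forbidden by Proposition~\ref{prop_4Tuple2Case}.

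Assume for contradiction that $(U_1, \ldots, U_5)$ is harmonic, with cosets $g_1 U_1, \ldots, g_5 U_5$ pairwise trivially intersecting. In the notation of Lemma~\ref{lemma_5TupleCase} we have $r_1 = 1$, so the alternative $3 \mid r_1$ cannot occur and the permutation of $\{1,2\}$ allowed by that lemma must be trivial. Hence, after relabelling $U_3, U_4, U_5$, we may assume $\alpha(1,3) = \alpha(2,3) = \alpha(3,5) = \alpha(4,5) = 1$, $\alpha(1,2) = \alpha(1,4) = \alpha(1,5) = \alpha(2,4) = \alpha(2,5) = 2$, $\alpha(3,4) = 3$, together with $3 \mid r_2$ and $2 \mid r_3$. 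From $\alpha(1,3) = 1$ and $[G:U_1] = 3$ we deduce $[G:U_1 \cap U_3] = \lcm(3, 6 r_3) = 6 r_3 = [G:U_3]$, so $U_3 \subseteq U_1$.

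Let $A_1 = g_1 U_1, A_2, A_3$ denote the three left cosets of $U_1$ in $G$. Since $U_3 \subseteq U_1$, the coset $g_3 U_3$ lies entirely inside a single coset of $U_1$, necessarily different from $A_1$; relabel $A_2, A_3$ so that $g_3 U_3 \subseteq A_2$. For each $i \in \{2, 4, 5\}$ the equality $\alpha(1, i) = 2$ yields $[U_i : U_i \cap U_1] = 2$, so $g_i U_i$ is the disjoint union of two cosets of $U_i \cap U_1$ lying in two distinct cosets of $U_1$. As $g_i U_i$ avoids $A_1$, these must be $A_2$ and $A_3$, and in particular $g_i U_i \cap A_2$ is a non-empty coset of $U_i \cap U_1$ in $G$.

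Left multiplication by $a^{-1}$ for any $a \in A_2$ now sends the four pairwise disjoint subsets $g_3 U_3$, $g_2 U_2 \cap A_2$, $g_4 U_4 \cap A_2$, $g_5 U_5 \cap A_2$ onto pairwise disjoint left cosets in $U_1$ of the subgroups $U_3, U_1 \cap U_2, U_1 \cap U_4, U_1 \cap U_5$, whose indices in $U_1$ are $2 r_3, 2 r_2, 4 r_4, 4 r_5$ respectively. Since $2 \mid r_3$, the tuple $(2 r_2, 2 r_3, 4 r_4, 4 r_5) = (2 r_2, 4(r_3/2), 4 r_4, 4 r_5)$ is thus $U_1$-harmonic, with $r_2$ odd and $r_2, r_3/2, r_4, r_5$ pairwise coprime; this contradicts Proposition~\ref{prop_4Tuple2Case}. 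I expect the main technical obstacle to be the verification that $g_i U_i \cap A_2$ is genuinely non-empty for $i \in \{2, 4, 5\}$, which is exactly where the index equality $[U_i : U_i \cap U_1] = 2$ and the disjointness $g_i U_i \cap A_1 = \emptyset$ combine.
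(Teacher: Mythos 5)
Your proof is correct, and its concluding argument is genuinely different from the paper's. Both proofs start by feeding $r_1 = 1$ into Lemma~\ref{lemma_5TupleCase}; your remark that $3 \nmid r_1$ forces the permutation of $\{1,2\}$ in that lemma to be trivial is the right way to fix the labelling, and relabelling $\{3,4,5\}$ is harmless since the hypotheses of the proposition are symmetric in $r_3,r_4,r_5$. From there the paper studies the permutation representation $\varphi\colon G\to S_3$ on $G/U_1$, shows $|\overline{U_i}|=2$ for $i=1,2,3,4$, deduces $U_2\cap U_3\leq N\cap U_3$ for $N=\ker(\varphi)$, and reaches the contradiction that $[U_3\cap N:U_3\cap U_2]=r_2/2$ is not an integer. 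You instead use $\alpha(1,3)=1$ to get $U_3\subseteq U_1$, observe that $\alpha(1,i)=2$ gives $[U_i:U_1\cap U_i]=2$ for $i\in\{2,4,5\}$ so that each $g_iU_i$, being disjoint from $A_1=g_1U_1$, meets the $U_1$-coset $A_2\supseteq g_3U_3$ in a full coset of $U_1\cap U_i$, and then translate into $U_1$ to exhibit the $U_1$-harmonic tuple $(2r_2,4(r_3/2),4r_4,4r_5)$, which is forbidden by Proposition~\ref{prop_4Tuple2Case} since $r_2$ is odd and $r_2$, $r_3/2$, $r_4$, $r_5$ are pairwise coprime. All the index computations and the disjointness bookkeeping check out, including the point you flag as delicate: the two $(U_1\cap U_i)$-cosets comprising $g_iU_i$ lie in distinct $U_1$-cosets, neither of which can be $A_1$. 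Your route has the advantage of recycling the already-proven $4$-tuple result rather than introducing a fresh ad hoc computation, and it is close in spirit to the descent-to-a-coset device used inside the proof of Claim 1 of Lemma~\ref{lemma_5TupleCase}; the paper's $S_3$ argument is more self-contained at this final step but less structural.
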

\begin{proof}
Assume to the contrary that there exist subgroups $U_1$, $U_2$, $U_3$, $U_4$, $U_5$ of $G$, such that $(U_1,U_2,U_3,U_4,U_5)$ is harmonic where
$[G:U_1] = 3$, $[G:U_2] = 3r_2$ and $[G:U_j] = 6r_j$ for $3 \leq j \leq 5.$
Then we can assume that $\alpha(x,y)$, and hence $|U_xU_y|$, is given by Lemma~\ref{lemma_5TupleCase} for any $1 \leq x,y \leq 5$, that $3 \mid r_2$ and $2 \mid r_3$. Since $G$ acts on the left cosets $G/U_1$ by left multiplication we obtain a homomorphism $\varphi: G \rightarrow S_3$, where $S_3$ denotes the symmetric group of degree $3$. We will use the bar notation for images of $\varphi$. Set $N = \ker(\varphi)$. Note that $N = U_1 \cap U_1^g \cap U_1^h$ where $\{1,g,h\}$ is a left transversal of $U_1$ in $G$. So $N \subseteq U_1$.

First observe that $\overline{U_1} \neq 1$ since otherwise $U_1 \subseteq N$ and $U_1U_2 \subseteq NU_2$, which is a subgroup of $G$. This contradicts $|U_1U_2| = \frac{2|G|}{3}$. Since the action of $U_1$ on $G/U_1$ admits a fix point, we deduce $|\overline{U_1}| = 2$. Next $\overline{U_2} \neq 1$ since otherwise $U_2U_1 \subseteq NU_1$, again contradicting $|U_1U_2| = \frac{2|G|}{3}$. If $3 \mid |\overline{U_2}|$ then $\overline{U_1}\overline{U_2} = \overline{G}$. Then using $N \subseteq U_1$ we get $U_1U_2 = NU_1U_2 = G$. Hence $|\overline{U_2}| = 2$. Since also $|U_1U_4| = \frac{2|G|}{3}$ we can deduce the same way that $|\overline{U_4}| = 2$.

Now consider the image of $U_3$. If $\overline{U_3} = 1$ then $U_3 \subseteq N$ and
$$\frac{|G|}{3} = |NU_4| = |NU_3U_4| \geq |U_3U_4| = \frac{|G|}{2}.$$
If $|\overline{U_3}|$ is divisible by $3$ then $\overline{U_1}\overline{U_3} = \overline{G}$, so $|G| = |NU_1U_3| = |U_1U_3| = \frac{|G|}{3}$. Hence $|\overline{U_3}| = 2$. So $[U_3:U_3 \cap N] = 2$ and since $[U_3:U_3 \cap U_2] = r_2$ is odd, being coprime with $r_3$, we conclude $U_2 \cap U_3 \leq N \cap U_3$. So on one hand $[G: U_2 \cap U_3] = 6r_2r_3$, while also
$$[G: U_2 \cap U_3] = [G:U_3 \cap N][U_3 \cap N:U_3 \cap U_2] = 12r_3[U_3 \cap N:U_3 \cap U_2].$$
This implies $[U_3 \cap N:U_3 \cap U_2] = \frac{r_2}{2}$ which is not an integer.
\end{proof}

Finally this allows us to prove Theorem A, though using elementary computer calculations.
\begin{center}
\textbf{Proof of Theorem A}
\end{center}

Let $G$ be a group of order smaller than $1440$ and let $\{g_iU_i\}_{i=1}^n$ be a partition of $G$ into cosets without multiplicity. Set $a_i = [G:U_i]$ for $1 \leq i \leq n$. Then by Lemma~\ref{lemma_restrindices} and Lemma~\ref{lemma_SmallGroupsInds} we know that $a_1$,...,$a_n$ are strictly bigger than $2$ and $\gcd(a_1,...,a_n)$ is divisible by $2$ or $3$.

Assume $\gcd(a_1,...,a_n)$ is divisible by $2$. Then using elementary computer calculations to obtain Egyptian fractions \eqref{eq_EgyFr} satisfying Lemma~\ref{lemma_restrindices}, where  $a_1$,...,$a_n$ are divisors of $|G|$ we get
$$|G| \in \{240, 360, 480, 720, 840, 960, 1008, 1080, 1200, 1320, 1344 \}.$$
If $|G| \neq 720$ this implies that $\{4,6,10\}$ or $\{4,6,14\}$ is a subset of $\{a_1,...,a_n\}$ and so $(U_1,...,U_n)$ is not harmonic by Proposition~\ref{prop_gcd2}. If $|G| = 720$ then $\{4,6,10\}$ or $\{6,4,20,8 \}$ is a subset of $\{a_1,...,a_n\}$ and we can conclude by Proposition~\ref{prop_gcd2} and Proposition~\ref{prop_4Tuple2Case} that $(U_1,...,U_n)$ is not harmonic.

Finally assume the $\gcd(a_1,...,a_n)$ is divisible by $3$. Then, again by elementary computer calculations, divisors  $a_1$,...,$a_n$ of $|G|$ can satisfy Lemma~\ref{lemma_restrindices} only if
$$|G| \in \{360, 540, 720, 1080, 1260 \}. $$
If $|G| \neq 1080$ then $\{3,6,9,15\}$ is a subset of $\{a_1,...,a_n\}$ and so $(U_1,...,U_n)$ is not harmonic by Proposition~\ref{prop_4Tuple3Case}. If $|G| = 1080$ then $\{3,6,9,15\}$ or $\{3,6,9,12,30\}$ is a subset of $\{a_1,...,a_n\}$ and so $(U_1,...,U_n)$ is not harmonic by Proposition~\ref{prop_4Tuple3Case} and Proposition~\ref{prop_5Tupler11}. \hfill \qed

\bibliographystyle{amsalpha}
\bibliography{HS}

\end{document}